\theoremstyle{plain}
\newcommand{\bF}{{\mathbb F}}
\newcommand{\bN}{{\mathbb N}}
\newcommand{\bQ}{{\mathbb Q}}
\newcommand{\bZ}{{\mathbb Z}}
\newcommand{\cG}{{\mathscr G}}
\newcommand{\cI}{{\mathscr I}}
\newcommand{\caO}{{\mathcal O}}
\newcommand{\fp}{{\mathfrak p}}
\newcommand{\fq}{{\mathfrak q}}
\DeclareSymbolFont{cyrletters}{OT2}{wncyr}{m}{n}
\DeclareMathSymbol{\Sha}{\mathalpha}{cyrletters}{"58}
\DeclareMathOperator{\Aut}{Aut}
\DeclareMathOperator{\coker}{coker}
\DeclareMathOperator{\im}{im}
\DeclareMathOperator{\Spec}{Spec}
\DeclareMathOperator{\Frob}{Frob}
\DeclareMathOperator{\res}{res}
\newcommand{\cd}{{\rm cd}}
\newcommand{\scd}{{\rm scd}}
\DeclareMathOperator{\Gal}{G}
\newcommand{\ab}{{\rm ab}}
\newcommand{\Syl}{{\rm Syl}}
\newtheorem*{rep@theorem}{\rep@title}
\newcommand{\newreptheorem}[2]{%
\newenvironment{rep#1}[1]{%
 \def\rep@title{#2 \ref{##1}}%
 \begin{rep@theorem}}%
 {\end{rep@theorem}}}
\newtheorem{thm}{Theorem}[section]
\newtheorem{prop}[thm]{Proposition}
\newtheorem{cor}[thm]{Corollary}
\newtheorem{Cor}[thm]{Corollary}
\newtheorem{lm}[thm]{Lemma}
\theoremstyle{definition}
\newtheorem{Def}[thm]{Definition}
\newtheorem{rem}[thm]{Remark}
\newcommand{\abs}{\sharp}
\newcommand\rar{ \rightarrow }
\newcommand\tar{ \twoheadrightarrow }
\newcommand\har{ \hookrightarrow }
\newcommand\Rar{ \Rightarrow }
\newcommand\LRar{ \Leftrightarrow }
\newcommand\charac{\mathop{ \rm char}}
\newcommand\dirlim{\mathop{\underrightarrow{\lim} }}
\newcommand\prolim{\mathop{\underleftarrow{\lim} }}
\newcommand{\sm}{{\,\smallsetminus\,}}
\newcommand\N{\rm N}
\newcommand\Cl{\rm Cl}
\newcommand\coh{\rm H}
\newcommand\rank{\rm rk}
\newcommand\bap{\bar{\fp}}
\newcommand\baq{\bar{\fq}}
\newcounter{absatzcounter}[section]
\numberwithin{equation}{section}
\title{On some anabelian properties of arithmetic curves}
\author{Alexander Ivanov\thanks{The author was supported by the Mathematical Center Heidelberg} \thanks{ivanov@ma.tum.de}}
\begin{document}

\maketitle

\begin{abstract}
In this paper we generalize an argument of Neukirch from birational anabelian geometry to the case of arithmetic curves. In contrast to the function field case, it seems to be more complicate to describe the position of decomposition groups of points at the boundary of the scheme $\Spec \caO_{K,S}$, where $K$ is a number field and $S$ a set of primes of $K$, intrinsically in terms of the fundamental group. We prove that it is equivalent to give the following pieces of information additionally with the fundamental group $\pi_1(\Spec \caO_{K,S})$: the location of decomposition groups of boundary points inside it, the $p$-part of the cyclotomic character, the number of points on the boundary of all finite etale covers, etc. Under certain finiteness hypothesis on Tate-Shafarevich groups with divisible coefficients, one can reconstruct all this quantities from the fundamental group alone.

% \textbf{2010 Mathematical Subject Classification:} 11R34, 11R37, 14G32.
\end{abstract}

\section{Introduction}

Let $K$ be a number field, $\overline{K}$ a fixed algebraic closure and $\Gal_K$ the absolute Galois group. In \cite{Ne} Neukirch showed, using an argument involving the Brauer groups of $K$ and its extensions, that the group $\Gal_K$ determines intrinsically, how the decomposition groups $D_{\fp}$ of primes of $K$ lie inside it. In contrast to the case of curves over finite felds, which is now well-understood, in particular, due to Tamagawa \cite{Ta}, there is almost nothing known about the case of arithmetic curves. Not even this Brauer group argument of Neukirch generalizes from the birational to the arithmetic situation, i.e., if one replaces $\Spec K$ by $\Spec \caO_{K,S}$, where $S \supseteq S_{\infty}$ is a finite set of primes of $K$ and considers only the decomposition groups of primes in $S$. The reason for this failure is the obstruction given by the non-vanishing second Tate-Shafarevich group. We are interested in the question, how much information about the decomposition groups of primes in $S$ is encoded intrinsically in $\Gal_S := \pi_1(\Spec \caO_{K,S})$. It seems to be possible, at least using some additional information, to reconstruct the position of decomposition groups of primes in $S$ inside $\Gal_S$. Essentially, it turns out that it is equivalent to know one of the following data: the embeddings $D_{\bap} \har \Gal_S$ for $\bap \in S_f := S \sm S_{\infty}$; the cyclotomic character on $\Gal_S$; the $S$-class number of all finite subfields $K_S/L/K$; the number $\abs{S(L)}$ for all these $L$. The results of this paper are part of the Ph.D. thesis \cite{Iv} of the author.

\begin{thm}\label{thm:decequivcondsgeneral} Let $K$ be a number field, $S \supseteq S_{\infty}$ a finite set of primes. Assume at least two rational primes lie in $\caO_{K,S}^{\ast}$, and $p$ is one of them. Assume $(\Gal_S, p)$ are given. The knowledge of one of the following extra structures is equivalent to any other:
 \begin{itemize}
\item[(i)] The embeddings $\iota_{\bap} \colon D_{\bap} \har \Gal_S$ for $\bap \in S_f$.
\item[(ii)] The cyclotomic $p$-character $\chi_{p} \colon U \rar \bZ_{p}^{\ast}$ on some open $U \subseteq \Gal_S$.
\item[(iii)] For all open $U \subseteq \Gal_S$ with totally imaginary fixed field $L$, the group $\Cl_S(L)$.
\item[(iii)'] For all open $U \subseteq \Gal_S$ with totally imaginary fixed field $L$, the number $\abs{\Cl_S(L)/p}$.
\item[(iv)] For all open $U \subseteq \Gal_S$ with fixed field $L$, the number $\abs{S(L)}$.
\end{itemize}
Assume the decomposition subgroups at primes in $S_f$ are the full local groups. Then the knowledge of the above is also equivalent to the knowledge of the following:
\begin{itemize}
\item[(ii)'] The cyclotomic character on some open subgroup $U \subseteq \Gal_S$.
\end{itemize}
\end{thm}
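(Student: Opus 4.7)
The plan is to establish the cycle (i) $\Rightarrow$ (ii) $\Rightarrow$ (iii) $\Rightarrow$ (iii)' $\Rightarrow$ (iv) $\Rightarrow$ (i), with (iii) $\Rightarrow$ (iii)' trivial, and (ii)' treated separately under the full-decomposition hypothesis. The decisive step will be (iv) $\Rightarrow$ (i).

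For (i) $\Rightarrow$ (ii): since $p \in \caO_{K,S}^\ast$, every prime of $K$ above $p$ lies in $S_f$, so (i) supplies the embeddings $\iota_{\bap}\colon D_{\bap}\hookrightarrow \Gal_S$ for each $\bap \mid p$. From the local absolute Galois group $D_{\bap}$ alone one recovers the local cyclotomic character $\chi_{p,\bap}\colon D_{\bap}\to \bZ_p^\ast$ intrinsically (as the action on $\mu_{p^\infty}$ inside the maximal algebraic closure, a canonical construction for any $p$-adic local Galois group). Letting $U\subseteq \Gal_S$ be the open subgroup generated by the images of the $D_{\bap}$ with $\bap\mid p$, the characters $\chi_{p,\bap}$ glue unambiguously to $\chi_p|_U$ because decomposition groups of the same prime are conjugate.

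For the cohomological bridge (ii) $\Rightarrow$ (iii) $\Rightarrow$ (iii)' $\Rightarrow$ (iv) I would use Kummer theory and the global Euler--Poincar\'e formula. For any open $V\subseteq \Gal_S$ with totally imaginary fixed field $L$, the cyclotomic character turns $\mu_{p^n}$ into a $V$-module whose cohomology $|H^i(V,\mu_{p^n})|$ is computable from $(\Gal_S,\chi_p)$. The Kummer sequence
$$0 \to \caO_{L,S}^\ast/p^n \to H^1(V,\mu_{p^n}) \to \Cl_S(L)[p^n] \to 0$$
combined with Dirichlet's unit theorem $|\caO_{L,S}^\ast/p^n|=p^{n(|S(L)|-1)}\cdot|\mu(L)[p^n]|$ yields one relation among $|H^1|$, $|S(L)|$ and $|\Cl_S(L)[p^n]|$, and Tate's Euler--Poincar\'e formula supplies a second independent one. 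The resulting system lets one pass between $|S(L)|$, $|\Cl_S(L)/p|$ and $|\Cl_S(L)|$ (the latter by $n\to\infty$ combined with the analogous argument for the second inverted rational prime $q$), realising the forward implications and, together with (i), the reverse bridges needed to close the cycle.

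The main obstacle is (iv) $\Rightarrow$ (i), a Neukirch-type reconstruction of decomposition subgroups from the numerical data $L\mapsto |S(L)|$. The plan is to characterise intrinsically which closed subgroups $H\subseteq \Gal_S$ are decomposition groups at primes in $S_f$ by combining (a) a group-theoretic isolation of candidate subgroups of ``local'' type via a Jannsen--Wingberg-style recognition theorem for absolute Galois groups of $p$-adic local fields, and (b) a combinatorial matching condition requiring, for every open $V\subseteq \Gal_S$ with fixed field $L^V$, that $|S(L^V)|$ decompose exactly as the sum over $V$-orbits of $\Gal_S$-conjugates of the candidate subgroups meeting $V$. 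The hardest part will be eliminating \emph{ghost} subgroups of correct local shape and compatible numerics that do not arise from actual primes; here the hypothesis of a \emph{second} inverted rational prime $q\neq p$ is essential, as applying the already-established equivalence with $q$ in place of $p$ yields a second cyclotomic character $\chi_q$, and the joint local--global compatibility of $\chi_p$ and $\chi_q$ on any candidate $H$ suffices to exclude impostors. Finally, the equivalence of (ii)' under the full-decomposition-subgroup hypothesis follows the same template: (ii)' restricts to its $p$-part, namely (ii); conversely, given (i) with full local $D_{\bap}$, the reconstruction of the first paragraph applies with every prime $\ell$ in place of $p$, and the resulting $\chi_\ell$ glue to the full cyclotomic character on a common open subgroup of $\Gal_S$.
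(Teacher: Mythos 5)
Your overall architecture (a cycle of implications with (iv) $\rightsquigarrow$ (i) as the decisive Neukirch-type step) matches the paper's, but two of your links break down in ways that are central to why the paper is organized as it is. First, your bridge (ii) $\Rightarrow$ (iii): Kummer theory, Poitou--Tate and Euler--Poincar\'e with $p^n$- and $q^n$-coefficients can only ever see the $p$-primary and $q$-primary parts of $\Cl_S(L)$, whereas (iii) asks for the \emph{group} $\Cl_S(L)$, including its $\ell$-parts for every $\ell$. No amount of passing to the limit over $n$ repairs this. The paper never attempts a direct cohomological passage (ii) $\rightsquigarrow$ (iii); it routes everything through (i), where the full class group falls out of the class field theory sequence $\prod_{\fp \in S} D_{\bap}^{\ab} \rar \Gal_S^{\ab} \rar \Cl_S(K) \rar 0$ as an honest quotient of $U$. (Your (i) $\Rightarrow$ (ii) also has a smaller gap: the subgroup generated by the $D_{\bap}$ with $\bap \mid p$ alone need not be open; the paper uses the images of \emph{all} $D_{\bap}$, $\fp \in S$, in $\Gal_S^{\ab}$, whose cokernel is the finite group $\Cl_S(K)$.)

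Second, and more seriously, your step (iv) $\Rightarrow$ (i) proposes to isolate candidate subgroups by a recognition theorem for absolute Galois groups of $p$-adic local fields. This fails at exactly the point the paper is built to circumvent: for $\bap \in S_f$ not lying over one of the two inverted rational primes, it is \emph{not known} that $D_{\bap}$ is the full local group (Chenevier--Clozel gives fullness only over the inverted primes), so the actual decomposition subgroups may simply not be of the local-Galois-group shape your sieve looks for, and Lemma \ref{lm:locfinindex}-type rigidity is unavailable. The paper's key device is to work instead with subgroups of $p$-decomposition type ($\bZ_p \ltimes \bZ_p$, the $p$-Sylow subgroups of $D_{\bap}$ for $\bap \in S_f \sm S_p$, which exist unconditionally by Lemma \ref{lm:CFTnaslujbe}), to characterize containment in a $D_{\bap}$ by the class-group obstruction (*)$_H$ of Proposition \ref{prop:gencaseequivs} (reconstructible from (iv) via Lemma \ref{lm:aus_iv_pi_p_U} and the principal ideal theorem), and then to glue Sylow subgroups belonging to the same prime via the kernels of the restriction maps on $\coh^2(U,\bZ/p\bZ)$. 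Your proposed elimination of ``ghost'' subgroups by invoking $\chi_q$ is moreover circular at this stage of the cycle: extracting $\chi_q$ from (iv) requires (iv) $\Rightarrow$ (i) $\Rightarrow$ (ii), which is what you are trying to prove. As written, the proposal does not close.
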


Observe that in the arithmetic situation, to give $\Gal_S$ together with the cyclotomic character, corresponds in the geometric situation over a finite field, in some sense, to give the fundamental group of a curve together with the attached outer Galois representation. The assumption in the theorem, that there are at least two rational primes lying under $S$, implies by the work of Clozel and Chenevier \cite{CC} Theorem 5.1, that the decomposition groups in $\Gal_S$ of primes in $S_{p_1} \cup S_{p_2}$ are the full local groups. It does not imply in general that this holds for all primes in $S$ (but it still does for primes lying in the maximal subset of $S$, defined over a totally real subfield: cf. \cite{CC} Remark 5.3(i)).

\begin{rem}\label{rem:remb} We make the following observation. If one of the data in the theorem is determined with respect to an open subgroup $U_0 \subseteq \Gal_S$, then it is also determined for $\Gal_S$. Indeed, it is enough to see this for (i). So, if the embeddings into $U_0$ of the decomposition groups at $S_f$ inside $U_0$ are given, then (using Corollary \ref{cor:FKSnormalizerproperty}(ii) below) the whole projective system of continuous $\Gal_S$-sets $\prolim_{U \subseteq U_0, U \triangleleft \Gal_S} S_f(U)$ is determined (here, we write $S_f(U)$ for $S_f(L)$ if $L = K_S^U$), and one obtains the decomposition groups $D_{\bap} \subseteq \Gal_S$ as the stabilizers of points under the action of $\Gal_S$ on it.
\end{rem}

Of all the quantities listed in the theorem, the numbers $\abs{S(L)}$ seem to be the most accessible ones. First of all, the numbers $\abs{S_{\infty}(U)}$ are determined by $(\Gal_S,p)$ (cf. Proposition \ref{prop:recinv_underLeo}). Proposition \ref{prop:thenumbersSUschafin} given below, allow to reconstruct the numbers $\abs{S_f(U)}$ for all open $U \subseteq U_0 \subseteq \Gal_S$ with $U_0$ small enough, under certain finiteness assumption. Then Remark \ref{rem:remb} allows to reconstruct the numbers $\abs{S_f(U)}$ for all $U \subseteq \Gal_S$ open.

\begin{prop}\label{prop:thenumbersSUschafin} Let $K,S$ be a number field together with a set of primes. Let $p \in \caO_{K,S}^{\ast}$. Assume that $p$ is odd and $\mu_p \subset K$. Assume, the following holds: for any character $\chi \colon \Gal_S \rar \bZ_p^{\ast} = \Aut(\bQ_p/\bZ_p)$ whose restriction to $\frac{1}{p}\bZ/\bZ$ is trivial, the group $\Sha^2(\Gal_S, \bQ_p/\bZ_p(\chi))$ is finite. Then for any such $\chi$, the group $\coh^2(\Gal_S, \bQ_p/\bZ_p(\chi))$ is of finite corank and

\begin{equation}
\abs{S_f(K)} =  1 + \max\nolimits_{\chi} {\rm corank} (\coh^2(\Gal_S, \bQ_p/\bZ_p(\chi))).
\end{equation}
\end{prop}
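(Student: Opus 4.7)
The plan is to compute $\mathrm{corank}\, \coh^2(\Gal_S, \bQ_p/\bZ_p(\chi))$ from Poitou--Tate duality and then show that the maximum over admissible $\chi$ is attained precisely at $\chi = \chi_{\cyc}$. Writing $A := \bQ_p/\bZ_p(\chi)$, its Tate dual is $T := \bZ_p(\chi^{-1}\chi_{\cyc})$; since $p$ is odd and $S \supseteq S_p \cup S_{\infty}$ one has $\mathrm{cd}_p(\Gal_S) = 2$, so the nine-term Poitou--Tate sequence shortens to
\[
0 \longrightarrow \Sha^2(\Gal_S, A) \longrightarrow \coh^2(\Gal_S, A) \longrightarrow \bigoplus_{v \in S} \coh^2(G_v, A) \longrightarrow \coh^0(\Gal_S, T)^\vee \longrightarrow 0.
\]
For $p$ odd the archimedean summands vanish. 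Local Tate duality gives $\coh^2(G_v, A) \cong \coh^0(G_v, T)^\vee$, which has $\bZ_p$-corank $1$ if $\chi|_{G_v} = \chi_{\cyc}|_{G_v}$ and $0$ otherwise; likewise $\coh^0(\Gal_S, T)^\vee$ contributes corank $1$ precisely when $\chi = \chi_{\cyc}$ globally. Using the finiteness assumption on $\Sha^2$, the group $\coh^2(\Gal_S, A)$ is then of finite corank, with
\[
\mathrm{corank}\, \coh^2(\Gal_S, A) \;=\; \lvert S'(\chi) \rvert \;-\; \delta_{\chi, \chi_{\cyc}},
\]
where $S'(\chi) := \{v \in S_f : \chi|_{G_v} = \chi_{\cyc}|_{G_v}\}$.

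Since $\mu_p \subset K$, $\chi_{\cyc}$ is itself trivial modulo $p$, and choosing $\chi = \chi_{\cyc}$ yields $S'(\chi_{\cyc}) = S_f$, hence corank $\lvert S_f(K) \rvert - 1$; this realises the lower bound. To promote it to equality I must rule out any $\chi \neq \chi_{\cyc}$ (still trivial mod $p$) with $S'(\chi) = S_f$, which would instead push the corank up to $\lvert S_f(K) \rvert$. For any such $\chi$, the ratio $\psi := \chi \cdot \chi_{\cyc}^{-1}$, viewed through the $p$-adic logarithm, is a non-zero continuous homomorphism $\Gal_S \to \bZ_p$ whose restriction to every $G_v$, $v \in S_f$, vanishes (the archimedean restrictions vanish automatically since $p$ is odd). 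Thus $\psi$ defines a non-trivial element of $\Sha^1(\Gal_S, \bZ_p) := \varprojlim_n \Sha^1(\Gal_S, \bZ/p^n)$. The classical finite Poitou--Tate duality $\Sha^1(\Gal_S, \bZ/p^n) \cong \Sha^2(\Gal_S, \mu_{p^n})^\vee$ passes to the limit, yielding
\[
\Sha^1(\Gal_S, \bZ_p) \;\cong\; \Sha^2(\Gal_S, \bQ_p/\bZ_p(1))^\vee,
\]
and the right side is finite by the standing hypothesis applied to $\chi = \chi_{\cyc}$. However $\Sha^1(\Gal_S, \bZ_p)$ embeds into the torsion-free $\bZ_p$-module $\Hom_{cts}(\Gal_S, \bZ_p)$, so finiteness forces it to vanish, contradicting $\psi \neq 0$.

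The main obstacle is the last step: converting the cofinite-coefficient assumption on $\Sha^2(\Gal_S, \bQ_p/\bZ_p(\chi))$ into the \emph{vanishing} (not merely finiteness) of the $\bZ_p$-valued group $\Sha^1(\Gal_S, \bZ_p)$. The trick is the compatibility of Poitou--Tate duality with the inverse limit on $\bZ/p^n$-coefficients combined with the torsion-freeness of $\Hom_{cts}(\Gal_S, \bZ_p)$, which upgrades finiteness to triviality. The preceding corank calculation is essentially bookkeeping of local Galois invariants in the Poitou--Tate sequence.
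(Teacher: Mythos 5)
Your proposal is correct, and its core — the four-term tail of the Poitou--Tate sequence, local duality to compute the corank of $\bigoplus_{v}\coh^2(G_v,\bQ_p/\bZ_p(\chi))$ as the number of finite $v\in S$ where $\chi$ agrees locally with $\chi_p$, and the cokernel term contributing $1$ exactly when $\chi=\chi_p$ — is the same computation the paper carries out. The one place you genuinely diverge is the exclusion step, i.e.\ showing that no $\chi\neq\chi_p$ (trivial mod $p$) can agree with $\chi_p$ on \emph{every} decomposition group of a prime in $S$. The paper does this via the global class field theory sequence \eqref{eq:CFTSeqNF}: the character $\chi^{-1}\otimes\chi_p$ kills the images of all local groups, hence factors through the finite group $\Cl_S(K)$, yet takes values in the torsion-free group $1+p\bZ_p$, so it is trivial. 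You instead package $\chi\chi_p^{-1}$ as a nonzero element of $\varprojlim_n\Sha^1(\Gal_S,\bZ/p^n)\cong\Sha^2(\Gal_S,\mu_{p^\infty})^\vee$ and invoke the standing finiteness hypothesis (for $\chi=\chi_p$) together with torsion-freeness of $\Hom(\Gal_S,\bZ_p)$. Both are valid; the paper's version has the small advantage of being unconditional at this step (it uses only finiteness of the $S$-class group, and indeed $\Sha^2(\Gal_S,\mu_{p^\infty})=0$ holds unconditionally by exactly that finiteness, as the paper notes in the proof of Proposition \ref{prop:gencaseequivs}), whereas yours spends the hypothesis once more — harmless here, since it is assumed anyway, and the two arguments are in the end dual formulations of the same fact.
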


This finiteness assumption on $\Sha^2(\Gal_S, \bQ_p/\bZ_p(\chi))$ has the following (at least partial) evidence: if $\chi$ is the cyclotomic $p$-character $\chi_p$, then $\Sha^2(\Gal_S, \bQ_p/\bZ_p(\chi)) = 0$ is easy to compute and for $\chi = \chi_p^{\otimes k}$ with $k \in \bZ \sm \{1\}$ Soul\'e showed in \cite{So} using K-theory, that $\coh^2(\Gal_S, \bQ_p/\bZ_p(\chi)) = 0$. Compare also \cite{NSW} 10.3.27 and the discussion preceeding it.

Directly from Theorem \ref{thm:decequivcondsgeneral} and Proposition \ref{prop:thenumbersSUschafin} we get:

\begin{Cor}[Local correspondence at the boundary]
For $i =1,2$, let $K_i, S_i$ be a number field together with a finite set of primes containing $S_{\infty}$. Assume that at least two rational primes lie completely under $S_i$, and assume that one of them, denoted $p$, lies under both. Let $\chi_{i,p}$ denote the $p$-cyclotomic character on $\Gal_{K_i,S_i}$. Let
\[ \sigma \colon \Gal_{K_1,S_1} \stackrel{\sim}{\longrightarrow} \Gal_{K_2,S_2} \]
\noindent be a topological isomorphism, such that $\chi_{2,p} \circ \sigma|_{U_1} = \chi_{1,p}|_{U_1}$ for some open subgroup $U_1 \subseteq \Gal_{K_1,S_1}$ holds. Then for any $\bap_1 \in S_f(K_{1,S_1})$, there is a unique prime $\sigma^{\ast}(\bap_1) \in S_f(K_{2,S_2})$, such that $\sigma(D_{\bap_1}) = D_{\sigma^{\ast}(\bap_1)}$. This defines a $\Gal_{K_1,S_1}$-equivariant bijection

\[ \sigma^{\ast} \colon S_{1,f}(K_{1,S_1}) \stackrel{\sim}{\longrightarrow} S_{2,f}(K_{2,S_2}), \]

\noindent which induces compatible bijections

\[ \sigma_{U_1}^{\ast} \colon S_{1,f}(L_1) \stackrel{\sim}{\longrightarrow} S_{2,f}(L_2), \]

\noindent for any $L_1/K_1$ finite with corresponding subgroup $U_1 \subseteq \Gal_{K_1,S_1}$ and $U_2 = \sigma(U_1)$ with corresponding field $L_2$. If the decomposition groups at primes in $S_{1,f}$ are the full local groups, then $\sigma_{U_1}$ preserves the residue characteristic and the absolute degree of primes.

Moreover, if $p$ is odd and if for $i = 1,2$, there is an open subgroup $U_i \subseteq \Gal_{K_i,S_i}$, such that for all characters $\chi \colon U_i \rar \bZ_p^{\ast}$ with torsion-free image, the group $\Sha^2(U_i, \bQ_p/\bZ_p(\chi))$ is finite, then the condition $\chi_{2,p} \circ \sigma|_{U_1} = \chi_{1,p}|_{U_1}$ is automatically satisfied.
\end{Cor}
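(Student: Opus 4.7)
The plan is to reduce each part of the corollary to the equivalence (i)$\Leftrightarrow$(ii) of Theorem \ref{thm:decequivcondsgeneral}, applied separately on each side. First I would apply that theorem to each pair $(\Gal_{K_i,S_i},p)$ together with the cyclotomic character $\chi_{i,p}|_{U_i}$, where $U_2 := \sigma(U_1)$, to obtain intrinsically the family of embeddings $D_{\bap_i} \hookrightarrow \Gal_{K_i,S_i}$ for all $\bap_i \in S_{i,f}(K_{i,S_i})$; Remark \ref{rem:remb} is what allows one to pass from $U_i$ to the whole group $\Gal_{K_i,S_i}$. Because this reconstruction is purely group-theoretic and the hypothesis $\chi_{2,p}\circ\sigma|_{U_1}=\chi_{1,p}|_{U_1}$ says precisely that the two characters correspond under $\sigma$, it follows that $\sigma$ sends every $D_{\bap_1}$ onto a decomposition subgroup $D_{\bap_2}$ of $\Gal_{K_2,S_2}$.

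Next I would use the normalizer property (Corollary \ref{cor:FKSnormalizerproperty}, invoked already in Remark \ref{rem:remb}) to identify the $\Gal_{K_i,S_i}$-set $S_{i,f}(K_{i,S_i})$ with the set of decomposition subgroups endowed with the conjugation action. In particular, distinct primes give distinct (self-normalizing, non-conjugate-identified) decomposition subgroups, so $\sigma^\ast(\bap_1)$ is uniquely determined by $\sigma(D_{\bap_1}) = D_{\sigma^\ast(\bap_1)}$, and $\sigma^\ast$ is automatically $\Gal_{K_1,S_1}$-equivariant via $\sigma$. Restricting to $U_1$- resp.\ $U_2$-fixed points yields the asserted compatible finite-level bijections $\sigma^\ast_{U_1}$. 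When the $D_{\bap_i}$ are the full local Galois groups $\Gal(\overline{K_{i,\bap_i}}/K_{i,\bap_i})$, both the residue characteristic and the absolute degree are invariants of the topological group $D_{\bap_i}$ alone (e.g.\ the residue characteristic $\ell$ is the unique rational prime for which the Sylow pro-$\ell$ subgroups of $D_{\bap_i}$ have cohomological dimension $2$, and $[K_{i,\bap_i}:\bQ_\ell]$ can then be read off from the Euler--Poincar\'e formula for the local group), so they are preserved by the topological isomorphism $\sigma|_{D_{\bap_1}}$.

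For the final assertion (automatic matching of cyclotomic characters under the $\Sha^2$-finiteness hypothesis), I would reason as follows. Applied to $U_i$ in place of $K$, Proposition \ref{prop:thenumbersSUschafin} delivers $\abs{S_{i,f}(V)}$ for every open $V \subseteq U_i$ from $(\Gal_{K_i,S_i},p)$ alone. Combined with Proposition \ref{prop:recinv_underLeo} for the archimedean places, this produces condition (iv) of Theorem \ref{thm:decequivcondsgeneral} on $U_i$, hence via the theorem also condition (ii): the cyclotomic $p$-character on a suitable open subgroup is an intrinsic invariant of $(\Gal_{K_i,S_i},p)$. Any topological isomorphism $\sigma$ must then respect it after restriction to a sufficiently small open subgroup, which is exactly the required compatibility. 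The genuine content lies entirely in Theorem \ref{thm:decequivcondsgeneral} and Proposition \ref{prop:thenumbersSUschafin}; the only conceptual obstacle at this stage is to verify that the reconstructions in those two results are sufficiently functorial that they are transported by the abstract isomorphism $\sigma$ — but this is essentially formal, because every step in both results is phrased in purely group-theoretic language and involves no choice that depends on the specific field $K_i$.
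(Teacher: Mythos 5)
Your proposal follows exactly the route the paper intends: the corollary is presented as a direct consequence of Theorem \ref{thm:decequivcondsgeneral} and Proposition \ref{prop:thenumbersSUschafin}, and your reduction --- (ii) $\rightsquigarrow$ (i) applied on each side and transported by $\sigma$, Corollary \ref{cor:FKSnormalizerproperty} for uniqueness and equivariance, the local invariants of Section \ref{sec:localanabprop} for residue characteristic and degree, and Proposition \ref{prop:thenumbersSUschafin} together with (iv) $\rightsquigarrow$ (ii) for the final assertion --- is precisely how the author obtains it. Two small slips, neither of which affects the argument: the finite-level sets $S_{i,f}(L_i)$ are the sets of $U_i$-\emph{orbits} on $S_{i,f}(K_{i,S_i})$, not the fixed-point sets; and your parenthetical criterion for recovering the residue characteristic is incorrect, since for a $p$-adic field \emph{every} Sylow pro-$\ell$ subgroup of the absolute Galois group has cohomological dimension $2$ --- the residue characteristic is instead detected, as in Section \ref{sec:localanabprop}, by the unbounded growth of $\dim_{\bF_\ell}\coh^1(U,\bZ/\ell\bZ)$ (equivalently, of the generator rank of $U(\ell)$) as $U$ ranges over open subgroups.
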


%************************************************************************************************************************************************************
%************************************************************************************************************************************************************

\subsection*{Notation}

Our notation will essentially coincide with the notations in \cite{NSW}. We collect some of the most important notations here. For a pro-finite group $G$ we denote by $G(p)$ its maximal pro-$p$ quotient and by $G_p$ a $p$-Sylow subgroup. For a subgroup $H \subseteq G$, we denote by $\N_G(H)$ its normalizer in $G$.

For a Galois extension $M/L$ of fields, $\Gal_{M/L}$ denotes its Galois group. By $K$ we always denote an algebraic number field, that is a finite extension of $\bQ$. If $L/K$ is a Galois extension and $\bap$ is a prime of $L$, then $D_{\bap, L/K} \subseteq \Gal_{L/K}$ denotes the decomposition subgroup of $\bap$. If $\fp := \bap|_K$ is the restriction of $\bap$ to $K$, then we sometimes allow us to write $D_{\bap}$ or $D_{\fp}$ instead of $D_{\bap,L/K}$, if no ambiguity can occur. We write $\Sigma_K$ for the set of all primes of $K$ and $S,T$ will usually denote subsets of $\Sigma_K$. If $L/K$ is an extension and $S$ a set of primes of $K$, then we denote the pull-back of $S$ to $L$ by $S_L$, $S(L)$ or $S$ (if no ambiguity can occur). We write $K_S/K$ for the maximal extension of $K$, which is unramified outside $S$ and $\Gal_S := \Gal_{K,S}$ for its Galois group. Further, for $p \leq \infty$ a (archimedean or non-archimedean) prime of $\bQ$, $S_p = S_p(K)$ denotes the set of all primes of $K$ lying over $p$ and $S_f := S \sm S_{\infty}$. 

Let $K,S$ be a number field and a set of primes. Then $n_K, r_1(K),r_2(K)$ is the degree, the number of real and of conjugate pairs of complex embeddings of $K/\bQ$ and $\bN(S) := \bN \cap \caO_{K,S}^{\ast}$, i.e., $p \in \bN(S)$ if and only if $S_p \subseteq S$. Further, $\chi_p \colon \Gal_S \rar \bZ_p^{\ast}$ denotes the cyclotomic $p$-character for $p \in \bN(S)$ and $\Cl_S(K)$ the $S$-class group of $K$. If $U \subseteq \Gal_S$ is an open subgroup and $L = (K_S)^U$, then we sometimes write $\Cl_S(U),\abs{S(U)},$ etc. instead of $\Cl_S(L),\abs{S(L)}$, etc.

If (x),(y) are some sets of invariants of $K,S$ (like, for example, (i),(ii) in Theorem \ref{thm:decequivcondsgeneral}), then (x) $\rightsquigarrow$ (y) resp. (x) $\leftrightsquigarrow$ (y) will have the following meaning: if the data in (x) are known, then we can deduce the data in (y) from them resp. the knowledge of (x) and (y) is equivalent. In particular, (x) $\rightsquigarrow$ (y) implies that if two pairs $(K_i,S_i), i = 1,2$ are given, with $\Gal_{K_1,S_1} \cong \Gal_{K_2,S_2}$ and such that the data in (x) coincide for $i = 1,2$, then also the data in (y) are coincide.

A local field means always a non-archimedean local field.

\subsection*{Outline of the paper} 

In Section \ref{sec:FKS_for_S_finite} we study intersections of decomposition subgroups of different primes inside $\Gal_S$, which is the first step towards a proof of Theorem \ref{thm:decequivcondsgeneral}. Section \ref{sec:MTproof} is devoted to the proof of Theorem \ref{thm:decequivcondsgeneral}. In Section \ref{sec:discussion} we prove Proposition \ref{prop:thenumbersSUschafin} and discuss which invariants of $K,S$ can be recovered from $\Gal_S$ (plus possibly some further information). 

%************************************************************************************************************************************************************
%************************************************************************************************************************************************************

\subsection*{Acknowledgements}
The results in this paper coincide with a part of author's Ph.D. thesis \cite{Iv}, which was written under supervision of Jakob Stix at the University of Heidelberg. The author is very grateful to him for the very good supervision, and to Kay Wingberg, Johnannes Schmidt and a lot of other people for very helpful remarks and interesting discussions. The work on author's Ph.D. thesis was partially supported by Mathematical Center Heidelberg and the Mathematical Institute Heidelberg. Also the author is grateful to both of them for their hospitality and the excellent working conditions.

%************************************************************************************************************************************************************
%************************************************************************************************************************************************************

\section{Intersections of decomposition subgroups}\label{sec:FKS_for_S_finite}

Let $S$ be a finite set of primes of $K$ and let $\bap,\baq$ be two primes of $K_S$ lying over $S_f$. In this section we investigate, under the assumption that $S_p \cup S_{\infty} \subseteq S$, how big the intersection of the decomposition groups $D_{\bap}, D_{\baq}$ inside $\Gal_S$ is. If $S$ is the set of all primes of $K$, then this intersection is trivial by a theorem of F.K. Schmidt \cite{NSW} 12.1.3. Its proof does not generalize to the case of restricted ramification, so we use different arguments, all of which are simple applications of class field theory. The main result of this section, which will be used later in the text is Corollary \ref{cor:FKSnormalizerproperty}. It is an analog of \cite{NSW} 12.1.4 in the case of $\Gal_S$. Finally, in Section \ref{sec:interdecsatgoodprimes} we consider the case of primes $\bap,\baq$ not lying over $S$.

%************************************************************************************************************************************************************
%************************************************************************************************************************************************************

\subsection{Groups of p-decomposition type}\label{sec:gpsofpdectype}

One of the most frequently used objects in our investigations will be the $p$-Sylow subgroup of an absolute Galois group of a local field 
with residue characteristic $\neq p$. Such a group has a very special and easy structure: it is a non-abelian pro-$p$-Demushkin group of rank two. 
Recall (cf. \cite{NSW} 3.9.9) that a pro-$p$-group is called a Demushkin group, if $\coh^1(G,\bZ/p\bZ)$ is finite, $\coh^2(G,\bZ/p\bZ)$ is one-dimensional over $\bF_p$ and the cup-product from the first degree into the second is non-degenerate. To have a shortcut, we define:

\begin{Def}\label{def:pdecsubgroups}
A group of \emph{$p$-decomposition type} is a non-abelian pro-$p$ Demushkin group of rank $2$.
\end{Def}

By a theorem of Demushkin (cf. \cite{NSW} 3.9.11) a one-relator pro-$p$-group $G$ is a Demushkin group, if and only if for some integers $n \geq 1, q \geq 0$ (assume for simplicity that $q \neq 2$; for the case $q = 2$, cf. \cite{NSW} 3.9.19), $G$ is generated by $x_1, \dots, x_n$ subject to one relation:

\[ x_1^q(x_1,x_2)(x_3,x_4) \dots (x_{n-1},x_n) = 1, \]

\noindent where $(x,y) = x^{-1}y^{-1}xy$. The numbers $n,q$ are the rank and the invariant of $G$ respectively. For a group $G$ of $p$-decomposition type we have $n = 2$ and hence $q \neq 0$ (otherwise $G$ would be abelian). Thus $G$ is of the form $\bZ_p \ltimes \bZ_p$ with $\bZ_p \har \Aut(\bZ_p) = \bZ_p^{\ast}$ injective. We need a description of all closed subgroups of groups of $p$-decomposition type.
%TODO: we have to assume $q > 2$ here. See 3.9.19 for $p=2$ case.

\begin{lm}\label{lm:pdectypelm}
Let $H$ be a group of $p$-decomposition type.
\begin{itemize}
\item[(i)]   A non-trivial closed subgroup of $H$ is either isomorphic to $\bZ_p$ or is of $p$-decomposition type.
\item[(ii)]  The open subgroups of $H$ are exactly the subgroups of $p$-decomposition type.
\item[(iii)] $H$ has a unique maximal closed normal pro-cyclic subgroup, denoted $H_n$. It is also the unique closed normal subgroup, such that $H/H_n$ is infinite pro-cyclic.
\item[(iv)]  If $N \subseteq H$ is open, then $N_n = N \cap H_n$.
\end{itemize}
\end{lm}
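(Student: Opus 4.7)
The plan is to exploit the explicit description of $H$ as a semidirect product $H = N \rtimes T$ with $N = \overline{\langle x_1\rangle} \cong \bZ_p$ normal, $T = \overline{\langle x_2\rangle} \cong \bZ_p$, and conjugation governed by an injective continuous character $\chi\colon T \har \bZ_p^{\ast}$ with image in $1 + p\bZ_p$ (for $p$ odd); this structure, together with the identification $H \cong \bZ_p \ltimes \bZ_p$, is provided in the paragraph preceding the lemma and reads off directly from the Demushkin relation $x_2^{-1} x_1 x_2 = x_1^{1-q}$. The arithmetic crux I will use repeatedly is that for every $0 \neq t \in T$ one has $\chi(t) \neq 1$, so that multiplication by $1 - \chi(t)$ on $N$ is injective.

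I would prove (iii) first. For any nontrivial closed normal pro-cyclic $M \triangleleft H$, project to $T$ via $\pi\colon H \twoheadrightarrow T$. If $\pi(M) \neq 0$ then $\pi|_M$ must already be injective: otherwise $M$ is a pro-cyclic extension of $\bZ_p$ by $\bZ_p$, but any such extension of pro-$p$ groups is necessarily $\bZ_p^{2}$ (topologically cyclic forces abelian, and $\Ext^1_{\bZ_p}(\bZ_p,\bZ_p) = 0$), which is not pro-cyclic. So $\pi|_M$ is an isomorphism onto $p^{\ell} T$ for some $\ell$, and there is a unique $m = (n_0, p^{\ell}) \in M$ with $\pi(m) = p^{\ell}$. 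Normality forces the conjugate of $m$ by an arbitrary $(a, 0) \in N$ to lie in $M$; it has $T$-component $p^{\ell}$, so by uniqueness it must equal $m$, and a direct computation yields $(1 - \chi(p^{\ell}))a = 0$ for every $a \in \bZ_p$, contradicting the crux. Hence $M \subseteq N$, and $H_n = N$. For the uniqueness clause, if $M \triangleleft H$ is closed with $H/M \cong \bZ_p$, the Demushkin relation gives $x_1^q = 1$ in $H^{\mathrm{ab}}$ with $q \neq 0$, so the image of $x_1$ in the torsion-free group $H/M$ vanishes; thus $N \subseteq M$, and since $H/N \cong \bZ_p$ admits no proper closed subgroup with quotient $\bZ_p$, we conclude $M = N$.

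Next I would handle (i) via the same projection/intersection dichotomy for an arbitrary nontrivial closed $M$. If $\pi(M) = 0$ or $M \cap N = 0$, then $M$ embeds into $N$ or into $T$, both copies of $\bZ_p$. Otherwise both $\pi(M)$ and $M \cap N$ are nontrivial closed, hence open, subgroups of $\bZ_p$, which forces $M$ itself to be open in $H$. Here I invoke Serre's theorem that open subgroups of a pro-$p$ Demushkin group are again Demushkin, together with the Schreier-type rank formula $\rank M = (\rank H - 2)[H:M] + 2$, which gives $\rank M = 2$. Non-abelianness of $M$ follows from the same centralizer computation: a would-be commuting pair of elements with nontrivial $N$- and $T$-components forces the impossible identity $(1 - \chi(p^{\ell}))a = 0$. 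So $M$ is of $p$-decomposition type. Part (ii) is then immediate: a group of $p$-decomposition type is non-pro-cyclic, so by (i) it falls in the third case and is open, while any open subgroup trivially sits there. For (iv), $M \cap H_n$ is closed, pro-cyclic, and normal in $M$, whereas $M/(M \cap H_n) = MH_n/H_n$ is open in $H/H_n \cong \bZ_p$, hence infinite pro-cyclic; the uniqueness clause of (iii) applied to $M$ itself (which is of $p$-decomposition type by (ii)) identifies $M \cap H_n$ with $M_n$.

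The main obstacle I expect is the case analysis in (iii) ruling out a ``diagonal'' closed normal pro-cyclic subgroup projecting nontrivially to $T$; this is exactly the point at which non-abelianness of $H$ enters, through $\chi(t) \neq 1$ for $t \neq 0$, and the very same computation is then reused in (i) to kill abelianness of a potential open subgroup. Everything else reduces either to the structure of closed subgroups of $\bZ_p$ or to the standard Serre/Schreier input on Demushkin groups.
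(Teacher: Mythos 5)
Your proof is correct, but it routes around the two technical bottlenecks differently from the paper. The paper does (i)+(ii) first: it asserts the dichotomy ``nontrivial closed $\Rightarrow$ isomorphic to $\bZ_p$ or open'' and, for an open $N$, splits $1 \to N\cap H_n \to N \to N/(N\cap H_n)\to 1$ to get $N \cong \bZ_p\ltimes_{\phi}\bZ_p$, ruling out the abelian case $\phi=1$ by strict cohomological dimension ($3=\scd_p(\bZ_p\times\bZ_p)\le \scd_p(H)=2$); it then deduces (iii) abstractly from (i), since a normal pro-cyclic subgroup not inside $H_n$ would meet $H_n$ trivially and produce an abelian subgroup $H_n\times H_1\subseteq H$, contradicting (i). You instead prove (iii) first by an explicit conjugation computation in semidirect-product coordinates --- injectivity of multiplication by $1-\chi(t)$ on the normal $\bZ_p$ --- and reuse that same computation to kill abelianness in (i), importing the Demushkin property of open subgroups from Serre's theorem and the index--rank formula (an input the paper itself invokes later, in Lemma \ref{lm:Demsubgroupsrankgrowth}). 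Your route is more elementary and uniform (one computation serves both (i) and (iii)) at the cost of fixing coordinates and of the external Demushkin input; the paper's route is coordinate-free but leans on $\scd_p$ and on the implicit equivalence between ``non-abelian $\bZ_p\ltimes\bZ_p$ with injective action'' and ``$p$-decomposition type''. Parts (ii) and (iv) are handled essentially identically in both. One cosmetic remark: your aside that $\chi$ has image in $1+p\bZ_p$ ``for $p$ odd'' is never actually used --- the only property needed is $\chi(t)\neq 1$ for $t\neq 0$, i.e.\ injectivity, which the paper's preceding paragraph supplies for all $p$ (the case $q=2$ being covered by the separate presentation it cites) --- so no case is lost.
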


\begin{proof} Let $H$ be a group of $p$-decomposition type.

\noindent (i)+(ii): one verifies immediately that a closed subgroup $N \subseteq H$ is either $\cong \bZ_p$ or open. It remains to show that an open subgroup $N \subseteq H$ is of $p$-decomposition type. Let $H_n \triangleleft H$ be a closed normal subgroup of $H$, such that $H/H_n \cong \bZ_p$. One obtains an exact sequence:
\[ 1  \rar  N \cap H_n \rar N \rar N / N \cap H_n \rar 1 \]
with first and last term isomorphic to $\bZ_p$. Hence this sequence splits and $N \cong \bZ_p \ltimes_{\phi} \bZ_p$ for some $\phi \colon \bZ_p \rar \Aut(\bZ_p)$. Either the image of $\phi$ is $\{1\}$ or $\phi$ is injective. In the first case $N \cong \bZ_p \times \bZ_p$ and in the second $N$ is of $p$-decomposition type. The first case can not occur, as otherwise one would have the contradiction $ 3 = \scd_p(N) \leq \scd_p(H) = 2$.
(iii): Let $H_n$ be as above. Assume $\bZ_p \cong H_1 \triangleleft H$ is normal and $H_1 \not\subseteq H_n$. Then 
\[ H_1 / (H_1 \cap H_n) \har H/H_n \cong \bZ_p, \] 

\noindent i.e., $H_1 \cap H_n = 1$. Now $H_n$, $H_1$ are two normal subgroups of $H$ with trivial intersection, i.e., $H_n \times H_1 \subseteq H$. But $H_n \times H_1 \not\cong \bZ_p$ is not of $p$-decomposition type. This is a contradiction to (i). Hence $H_n$ is the unique maximal normal closed pro-cyclic subgroup of $H$.

Assume now, $H_2 \triangleleft H$ is normal with $H/H_2 \cong \bZ_p$ and $H_2 \not\supseteq H_n$. As $H_n/H_2 \cap H_n \har H/H_2 \cong \bZ_p$, we get $H_n \cap H_2 = 1$. The same reasoning as above gives a contradiction. Thus $H_2 \supseteq H_n$. Then $H_2 = H_n$ follows easily.

(iv): Since $N \subseteq H$ is open, $N \not\subseteq H_n$ and we have an inclusion $1 \neq N/N \cap H_n \har H/H_n$, hence $N/N \cap H_n$ is infinite pro-cyclic. Thus by (iii), $N \cap H_n = N_n$.
\end{proof}

To a group $H$ of $p$-decomposition type we can associate the character defining the semi-direct product:
\[\chi_H \colon H \tar H/H_n \har \bZ_p^{\ast} = \Aut(H_n).\]

%************************************************************************************************************************************************************
%************************************************************************************************************************************************************

\subsection{Local situation}\label{sec:cftapp_locsit}

\noindent Let $\kappa$ be a local field with residue characteristic $\ell$ and let $\Gal_{\kappa}$ be its absolute Galois group. For $p \neq \ell$, the $p$-Sylow subgroups of the maximal tame quotient 
\[ \Gal_{\kappa}^{\rm tr} \cong \hat{\bZ} \ltimes \hat{\bZ}^{(\ell^{\prime})} \]
of $\Gal_{\kappa}$ are of $p$-decomposition type, which can easily be seen directly. Consider now a $p$-Sylow subgroup $\Gal_{\kappa, p} \subseteq \Gal_{\kappa}$. The composition

\[ \Gal_{\kappa, p} \har \Gal_{\kappa} \tar \Gal_{\kappa}^{\rm tr} \]

\noindent is injective, since $p \neq \ell$ and the kernel of the second map is a pro-$\ell$-subgroup. Thus $\Gal_{\kappa, p}$ is isomorphic to a $p$-Sylow subgroup of $\Gal_{\kappa}^{\rm tr}$, and hence is of $p$-decomposition type.

%************************************************************************************************************************************************************
%************************************************************************************************************************************************************

\subsection{Metabelian covers}\label{sec:FKSarithMetabCase}

\begin{lm}\label{lm:CFTnaslujbe}
Let $K$ be a number field and $S \supseteq S_p \cup S_{\infty}$ a set of primes of $K$. Let $\bap \in (S_f \sm S_p)(K_S)$ and $\fp = \bap|_K$. Let $\cG_{\fp}$ denote the absolute Galois group of $K_{\fp}$ and $\cG_{\fp,p}$ a $p$-Sylow subgroup. Then the composition
\[ \phi \colon \cG_{\fp,p} \har \cG_{\fp} \tar D_{\bap} \har \Gal_S \]

\noindent is injective. In particular, any $p$-Sylow subgroup of $D_{\bap}$ is of $p$-decomposition type.
\end{lm}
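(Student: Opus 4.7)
The plan is to reformulate the injectivity of $\phi$ as a statement about local $p$-extensions and to verify it by class field theory. Let $N := \ker(\cG_{\fp} \twoheadrightarrow D_{\bap}) = \Gal(\overline{K_{\fp}}/(K_S)_{\bap})$, which is closed and normal in $\cG_{\fp}$. Since a Sylow pro-$p$ subgroup of a profinite group intersects a closed normal subgroup in a Sylow pro-$p$ subgroup of the latter, $N \cap \cG_{\fp,p}$ is itself a Sylow pro-$p$ subgroup of $N$, so $\phi$ is injective if and only if $N$ is a pro-$p'$ group --- equivalently, the maximal pro-$p$ extension $K_{\fp}(p)$ of $K_{\fp}$ is contained in the $\bap$-completion $(K_S)_{\bap}$.

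First I would reduce to the case $\mu_p \subseteq K$: since $K(\mu_p)/K$ is unramified outside $S_p \cup S_\infty \subseteq S$ and of degree dividing $p-1$, replacing $K$ by $K(\mu_p) \subseteq K_S$ does not affect the Sylow pro-$p$ subgroups or the truth of the claim. Under this assumption $\cG_{\fp,p}$ is a rank-$2$ Demushkin pro-$p$ group --- in particular metabelian with procyclic commutator --- so $K_{\fp}(p)/K_{\fp}$ is itself a metabelian tower. The aim is then to build this tower globally inside $K_S$, realising it in $(K_S)_{\bap}$ step by step.

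At the abelian level one needs two independent $\bZ_p$-extensions of $K_{\fp}$ coming from $K_S/K$. The unramified one is immediate: the cyclotomic tower $K(\mu_{p^\infty})/K$ lies in $K_S$ and, since $\fp \nmid p$, realises the unramified $\bZ_p$-extension of $K_{\fp}$. The tamely ramified one is produced by Kummer theory: with $\mu_p \subseteq K$, homomorphisms $\Gal_S \to \bF_p$ correspond to elements of the Selmer-type group $V_S := \{a \in K^\ast/(K^\ast)^p : v_{\fq}(a) \equiv 0 \pmod p \text{ for all } \fq \notin S\}$, and the Poitou--Tate nine-term exact sequence for $\mu_p$ together with $\fp \in S$ ensures that the localization $V_S \to K_{\fp}^\ast/(K_{\fp}^\ast)^p$ surjects onto the class of a uniformizer; a corresponding $a \in K^\ast$ then gives $K(\sqrt[p]{a})/K \subseteq K_S$ totally tamely ramified at $\fp$.

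To pass from the abelian piece to the full metabelian extension I would apply the same class-field-theoretic construction a second time, now inside the base $K_S^{\mathrm{ab}}(p)/K$ just produced. Since $[\cG_{\fp,p}, \cG_{\fp,p}]$ is procyclic, one $\bZ_p$-direction over the abelian base suffices, and the Demushkin relation defining $\cG_{\fp,p}$ is then automatic from the compatibility of the local and global reciprocity maps. Passing to the limit yields $(K_S)_{\bap} \supseteq K_{\fp}(p)$, and hence $N \cap \cG_{\fp,p} = 1$. The hard part will be the Kummer construction: ensuring that the global $S$-admissible class has the prescribed local behaviour at $\fp$ relies on the full Poitou--Tate machinery, and is the genuine ``metabelian'' class-field-theoretic input of the section; the other ingredients --- the Sylow argument, the cyclotomic realization, and the bootstrap from abelian to metabelian --- are essentially formal once the Kummer step is available.
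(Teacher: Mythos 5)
Your two building blocks --- the cyclotomic $\bZ_p$-tower for the unramified direction, and Kummer extensions $K(\sqrt[p]{a})$ with $v_{\fp}(a)\equiv 1$ and $v_{\fq}(a)\equiv 0 \pmod p$ for $\fq\notin S$ for the ramified direction --- are exactly the two ingredients of the paper's proof, but the step you yourself flag as the essential one does not go through as stated. An element $a\in K^{\ast}$ with those valuation conditions exists if and only if the class of $\fp$ lies in $p\Cl_S(K)$, so the localization map $V_S\rar K_{\fp}^{\ast}/(K_{\fp}^{\ast})^p$ does \emph{not} in general hit the class of a uniformizer: by the nine-term sequence its cokernel embeds into the dual of $\coh^1(\Gal_S,\bZ/p\bZ)$, and the uniformizer class at $\fp$ is obstructed by global degree-$p$ characters unramified at $\fp$ on which $\Frob_{\fp}$ is nontrivial. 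Poitou--Tate alone cannot remove this obstruction. The paper kills it by first passing to the Hilbert class field $L_0$ composed with $K(\zeta_{p^n})$, where $\fp$ becomes principal, $\fp\caO_L=(\epsilon)$ with $\epsilon$ automatically an $S$-unit and $v_{\fp_1}(\epsilon)=1$; then $L(\epsilon^{1/p^n})/L$ is unramified outside $S$ and tamely ramified of degree $p^n$ at $\fp_1$. Some such enlargement of the base field is indispensable.

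A second problem is structural: since $\fp\nmid p$, the maximal abelian pro-$p$ extension of $K_{\fp}$ is $\bZ_p\times(\text{finite cyclic})$, the ramified part being bounded by the $p$-part of $q-1$; there are \emph{not} two independent $\bZ_p$-extensions of $K_\fp$ at the abelian level, so a single degree-$p$ Kummer step over $K$ cannot be iterated to unbounded ramification without first enlarging the residue field at $\fp$ along the unramified tower (this is precisely why the paper puts $\zeta_{p^n}$ into the base before extracting $p^n$-th roots). Your ``second application over $K_S^{\ab}(p)$'' gestures at this, but the surjectivity you need must then be established over a varying family of base fields, where the class-group obstruction above recurs. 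Finally, realizing all of $K_{\fp}(p)$ inside $(K_S)_{\bap}$ is more than is needed: since the cyclotomic tower already forces $\ker(\phi)\subseteq\cI_{\fp,p}\cong\bZ_p$, whose nontrivial closed subgroups are open, it suffices to exhibit, for every $n$, a finite subextension of $K_S/K$ with ramification degree $p^n$ at $\fp$. This reduction, which is the one the paper uses, lets you bypass both the reformulation via $K_{\fp}(p)$ (whose equivalence with injectivity of $\phi$ in any case requires $\mu_p\subset K_{\fp}$) and the unproved claim that the Demushkin relation comes for free from reciprocity.
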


\begin{proof}
% From the commutative diagram

% \centerline{
% \begin{xy}
% \xymatrix{ \cG_{\fp,p}   	 \ar@^{(->}[r] \ar@{=}[d] & \cG_{\fp}  \ar@{=}[d] \ar@{->>}[r]	& D_{\bap} \ar@{->>}[d] \ar@^{(->}[r] & G \ar@{->>}[d] \\
% 	   \cG_{\fp,p}  \ar@^{(->}[r] 		 & \cG_{\fp} \ar@{->>}[r] & D_{\bap} \ar@^{(->}[r] &  \Gal_S
% }
% \end{xy}
% }

% \noindent one sees that it is enough to consider the case $G = \Gal_S$.

\noindent Since $\fp \not\in S_p$, we have $\cG_{\fp,p} \cong (\cG_{\fp,p}/\cI_{\fp,p}) \ltimes \cI_{\fp,p}$, where both factors are isomorphic to $\bZ_p$ and the second is the inertia subgroup. Due to the cyclotomic $p$-extension, which realizes the maximal unramified $p$-extension at $\fp$ and is unramified outside $S_p \subseteq S$, the kernel of $\phi$ is contained in $\cI_{\fp,p}$. To show that $\ker(\phi) = 1$, it is enough to show that for any $n > 0$, there is a finite subextension of $K_S/K$, whose ramification degree at $\fp$ is $p^n$.

Therefore, let $L_0/K$ be the Hilbert class field of $K$ and set $L := L_0 K(\zeta_{p^n})$. This is an abelian extension of $K$, unramified outside $S_p$. The ideal $\fp$ is on the one side unramified in $L$, and on the other side principal (being principal already in $L_0$). Thus we can write

\[ \fp \caO_L = (\epsilon) = \fp_1 \fp_2 \dots \fp_r, \]

\noindent with $\epsilon \in \caO_L$, and $\fp_i$ unequal prime ideals of $\caO_L$. We can assume that $\bap|_L = \fp_1$. Since $\fp \in S$, we have $\epsilon \in \caO_{L,S}^{\ast}$, and the extension $L(\epsilon^{1/p^n})$ is unramified outside $S_p \cup S_{\fp} \subseteq S$. But since $\fp_1|\fp$ is unramified, we have
\[ v_{\fp_1}(\epsilon) = 1, \]

\noindent where $v_{\fp_1}$ denotes the valuation corresponding to $\fp_1$. Thus the local extension $L_{\fp_1}(\epsilon^{1/p^n})/L_{\fp_1}$ is tamely ramified of degree $p^n$. 
% (observe also, that it is tame outside $S_p$)
\end{proof}

%\begin{rem} Notice that for the construction in the lemma, it is not enough to use the pro-$p$-quotients of the groups involved there, since to trivialize the $\fp$ in $L$ one need the whole Hilbert class field. %TODO: ist das wirklich notwendig? reicht es nicht \fp modulo p zu trivializieren.
%\end{rem}

\begin{prop}\label{prop:notopenintersec}
Let $\bap \neq \bar{\fq} \in S_f(K_S)$, such that there is a rational prime $p \in \caO^{\ast}_{K_S,S \sm \{\bap, \bar{\fq} \}}$. Choose some $p$-Sylow subgroups $D_{\bap,p} \subseteq D_{\bap}$ resp. $D_{\bar{\fq},p} \subseteq D_{\bar{\fq}}$. Then $D_{\bap,p} \cap D_{\bar{\fq},p}$ is not open in $D_{\bap,p}$. In particular, $D_{\bap} \cap D_{\bar{\fq}}$ is not open in $D_{\bap}$.
\end{prop}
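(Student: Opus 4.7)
The plan is to argue by contradiction. Assume $V := D_{\bap,p} \cap D_{\baq,p}$ is open in $D_{\bap,p}$. By Lemma \ref{lm:CFTnaslujbe} the group $D_{\bap,p}$ is of $p$-decomposition type, so by Lemma \ref{lm:pdectypelm}(ii) the open subgroup $V$ is of $p$-decomposition type and $[D_{\bap,p}:V] = p^N < \infty$. For any $n > N$, I will construct a finite cyclic $p^n$-extension $M/L$ inside $K_S$ (for a suitable finite Galois extension $L/K$ in $K_S$) which is totally tamely ramified at $\bap|_L$ and totally split at $\baq|_L$. Analysing the images of $D_{\bap,p}, D_{\baq,p}, V$ restricted to $\Gal_L := \Gal(K_S/L)$ under the projection $\phi : \Gal_L \twoheadrightarrow \Gal(M/L) \cong \bZ/p^n\bZ$ will then yield $[D_{\bap,p}:V] \geq p^n > p^N$, a contradiction.

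Construction of $M$. Choose $L/K$ finite Galois in $K_S$ containing $K(\zeta_{p^n}) \cdot H_K$, with $H_K$ the Hilbert class field of $K$, and large enough that $\bap|_L \neq \baq|_L$. The hypothesis $p \in \caO_{K_S, S \sm \{\bap, \baq\}}^{\ast}$ ensures $\bap, \baq$ have residue characteristic prime to $p$, so Kummer theory modulo $p^n$ is well-behaved at these primes. Writing $\fp := \bap|_K$, we have $\fp\caO_L = (\epsilon)$ principal with $v_{\bap|_L}(\epsilon) = 1$, as in the proof of Lemma \ref{lm:CFTnaslujbe}. The heart of the construction is to modify $\epsilon$ by an element $u \in \caO_{L,S}^{\ast}$ to obtain $\epsilon' := \epsilon u$ satisfying: (a) $v_{\fr}(\epsilon') \equiv 0 \pmod{p^n}$ for all $\fr \notin S$; (b) $v_{\bap|_L}(\epsilon')$ is coprime to $p$; (c) $\epsilon' \in L_{\baq|_L}^{\ast p^n}$; and, in the case $\fp = \fq$, also (d) $v_{\baq|_L}(\epsilon') \equiv 0 \pmod{p^n}$. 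Set $M := L(\sqrt[p^n]{\epsilon'}) \subseteq K_S$: it is cyclic of degree $p^n$ over $L$ (by (b), which forces $\epsilon'$ to have order $p^n$ in $L^{\ast}/L^{\ast p^n}$), totally tamely ramified at $\bap|_L$ (by (b)), and totally split at $\baq|_L$ (by (c) and (d)).

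Analysis of images. The restriction of $\phi$ to $D_{\bap} \cap \Gal_L$ surjects onto the decomposition group of $\bap|_L$ in $\Gal(M/L)$, which is all of $\bZ/p^n\bZ$ by total ramification. Choosing the $p$-Sylow $D_{\bap,p}$ of $D_{\bap}$ compatibly so that $D_{\bap,p} \cap \Gal_L$ is a $p$-Sylow of $D_{\bap} \cap \Gal_L$ (possible by Sylow theory for prosolvable profinite groups applied to the local Galois group), it follows that $\phi(D_{\bap,p} \cap \Gal_L) = \bZ/p^n\bZ$. Meanwhile, $\phi(D_{\baq,p} \cap \Gal_L)$ lies inside the decomposition group of $\baq|_L$ in $\Gal(M/L)$, which is trivial by complete splitting; hence $\phi(V \cap \Gal_L) = 1$. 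The induced surjection $(D_{\bap,p} \cap \Gal_L)/(V \cap \Gal_L) \twoheadrightarrow \bZ/p^n\bZ$, combined with the canonical injection $(D_{\bap,p} \cap \Gal_L)/(V \cap \Gal_L) \hookrightarrow D_{\bap,p}/V$, yields $[D_{\bap,p}:V] \geq p^n$, the desired contradiction. The second assertion of the proposition (non-openness of $D_{\bap} \cap D_{\baq}$ in $D_{\bap}$) follows because an open intersection at the level of the full decomposition groups would force its intersection with $D_{\bap,p}$ to be open, and that intersection is contained in some $D_{\bap,p} \cap D_{\baq,p}^{\prime}$ for a suitable $p$-Sylow $D_{\baq,p}^{\prime}$ of $D_{\baq}$, contradicting the main claim.

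Main obstacle. The step I expect to be most delicate is the construction of $\epsilon'$ satisfying (c) (and (d) in the case $\fp = \fq$): modifying $\epsilon$ by a global $S$-unit so as to be a local $p^n$-th power at $\baq|_L$. This is a Grunwald--Wang-type approximation problem, asking that the class of $\epsilon^{-1}$ lie in the image of the reduction $\caO_{L,S}^{\ast}/p^n \to L_{\baq|_L}^{\ast}/L_{\baq|_L}^{\ast p^n}$, with compatible adjustments of valuations at primes of $S$. Since $\caO_{L,S}^{\ast}$ has large rank by Dirichlet--Minkowski while the local target is finite, such a $u$ can be produced after possibly enlarging $L$; for $p$ odd there is no global obstruction, and for $p = 2$ the standard Grunwald--Wang adjustment suffices.
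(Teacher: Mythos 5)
The skeleton of your argument (Sylow images in a finite quotient, the coset count, and the reduction of the second assertion to the first) is sound, but the construction at its heart has a genuine gap: condition (c), the existence of $u \in \caO_{L,S}^{\ast}$ with $\epsilon u \in (L_{\baq|_L}^{\ast})^{p^n}$, is not justified and is in general obstructed. You need the class of $\epsilon^{-1}$ to lie in the image of $\caO_{L,S}^{\ast} \rar L_{\baq|_L}^{\ast}/(L_{\baq|_L}^{\ast})^{p^n}$, while simultaneously keeping $v_{\bap|_L}(\epsilon u)$ prime to $p$ (note that $S$-units may have nonzero valuation at $\bap|_L \in S$, so (b) is a genuine extra constraint, not automatic). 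That image is a proper subgroup in general: the failure of surjectivity of the localization map $\coh^1(\Gal_{L,S},\mu_{p^n}) \rar \bigoplus_{v}\coh^1(L_v,\mu_{p^n})$ is controlled by Poitou--Tate and does not vanish for extensions constrained to be unramified outside $S$ --- this is exactly the Tate--Shafarevich-type obstruction, mentioned in the introduction, that prevents the naive transfer of Neukirch's birational argument to $\Spec \caO_{K,S}$. Your appeal to Grunwald--Wang does not apply here: Grunwald--Wang achieves prescribed local behaviour at the cost of auxiliary ramification at places outside $S$, and such an extension leaves $K_S$, so its Galois group is no longer a quotient of $\Gal(K_S/L)$. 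Enlarging $L$ does not visibly repair this either, since any new $S$-unit generating $\bap|_{L'}$ to a $p$-prime valuation poses the same approximation problem at $\baq|_{L'}$.

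The paper's proof sidesteps precisely this difficulty by demanding much less of the auxiliary extension: $L(\epsilon^{1/p^n})/L$ is only required to be totally ramified at $\bap|_L$ and \emph{unramified} (not split) at $\baq|_L$, and the latter is automatic because $v_{\baq|_L}(\epsilon)=0$. This yields only $I_{\bap,p}\cap I_{\baq,p}=1$ for the inertia subgroups; the passage from there to non-openness of $D_{\bap,p}\cap D_{\baq,p}$ is then purely group-theoretic via Lemma \ref{lm:pdectypelm}: an open intersection would be of $p$-decomposition type and open in both Sylow subgroups, so by parts (iii)--(iv) its unique maximal normal pro-cyclic subgroup would equal both $I_{\bap,p}\cap D_{\baq,p}$ and $D_{\bap,p}\cap I_{\baq,p}$, hence land in $I_{\bap,p}\cap I_{\baq,p}=1$, contradicting that it must be isomorphic to $\bZ_p$. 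To salvage your approach, replace ``totally split at $\baq|_L$'' by ``unramified at $\baq|_L$'' and run your counting argument against the inertia subgroups instead of the decomposition groups; this recovers the paper's first step, but you will still need the Demushkin structure theory of groups of $p$-decomposition type to conclude.
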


\begin{proof} By Lemma \ref{lm:CFTnaslujbe}, $D_{\bap,p}$ resp. $D_{\bar{\fq},p}$ are groups of $p$-decomposition type. Let $\fp = \bap|_K$, $\fq = \bar{\fq}|_K$. By going up to a finite extension, we can assume $\fp \neq \fq$. Observe that the extension constructed in the proof of Lemma \ref{lm:CFTnaslujbe} is Galois and unramified in $\fq$, as $\fq \not\in S_p \cup \{ \fp \}$. Thus if $I_{\cdot,p} \subseteq D_{\cdot,p}$ denotes the corresponding inertia subgroup, we have $I_{\bap,p} \cap I_{\bar{\fq},p} = 1$.

Now, assume $D_{\bap,p} \cap D_{\bar{\fq},p} \subseteq D_{\bap,p}$ is open. The second group is of $p$-decomposition type, hence the first also is (Lemma \ref{lm:pdectypelm}(ii)). Hence, again by Lemma \ref{lm:pdectypelm}(ii), the inclusion $D_{\bap,p} \cap D_{\bar{\fq},p} \subseteq D_{\bar{\fq},p}$ is also open. The maximal normal pro-cyclic subgroup of $D_{\cdot, p}$ is $I_{\cdot,p}$. Thus by Lemma \ref{lm:pdectypelm}(iv) applied to both inclusions, the maximal normal pro-cyclic subgroup of $D_{\bap,p} \cap D_{\bar{\fq},p}$ is equal to $I_{\bap,p} \cap D_{\bar{\fq},p}$ and to $D_{\bap,p} \cap I_{\bar{\fq},p}$ simultaneously, i.e., these two intersections are equal. This implies $D_{\bap,p} \cap I_{\bar{\fq},p} = I_{\bap,p} \cap I_{\bar{\fq},p} = 1$. But this group, being the maximal normal pro-cyclic subgroup of a group of $p$-decomposition type must be isomorphic to $\bZ_p$. This is a contradiction.

Finally, if $D_{\bap} \cap D_{\bar{\fq}} \subseteq D_{\bap}$ were open, then also $D_{\bap,p} \cap D_{\bar{\fq}} \subseteq D_{\bap,p}$. But $D_{\bap,p} \cap D_{\bar{\fq}}$ is a pro-$p$-subgroup of $D_{\bar{\fq}}$, hence contained in a $p$-Sylow subgroup $D^{\prime}_{\bar{\fq},p}$ of it. Thus the intersection $D_{\bap,p} \cap D^{\prime}_{\bar{\fq},p} = D_{\bap,p} \cap D_{\bar{\fq}}$ would be open in $D_{\bap,p}$, which contradicts to the already proven part of the proposition.
\end{proof}

Observe that all arguments up to now made only use of solvable extensions of $K$, thus we could also replace $\Gal_S$ by its maximal solvable quotient. Before going on, we quote the following recent result of Clozel and Chenevier:

\begin{thm}[Clozel-Chenevier, \cite{CC} Theorem 5.1]\label{thm:Clozel_Chenevier} Let $S_f$ be a set of finite primes of $\bQ$. Let $S := S_f \cup \{\infty\}$. If $\abs{S_f} \geq 2$, then for any $p \in S$ and $\bap \in S(\bQ_S)$ lying over $p$, the map 

\[ \Gal_{\overline{\bQ_p}/\bQ_p} \tar D_{\bap,\bQ_S/\bQ} \subseteq \Gal_{\bQ,S} \]

\noindent is injective. 
\end{thm}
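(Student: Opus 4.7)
The plan is to detect non-trivial elements of $\Gal_{\overline{\bQ_p}/\bQ_p}$ via global Galois representations of $\Gal_{\bQ,S}$. Suppose $g \in \Gal_{\overline{\bQ_p}/\bQ_p}$ lies in the kernel of the map into $D_{\bap,\bQ_S/\bQ}$. Then for every continuous finite-dimensional representation $\rho \colon \Gal_{\bQ,S} \rar \GL_n(\bar{\bQ}_\ell)$ with $\ell \in S_f \sm \{p\}$, the image of $g$ under the composition $\rho \circ \iota_{\bap}$ is trivial. The strategy is to exhibit a family of such representations $\rho$ whose local restrictions along $\iota_{\bap}$ separate any two distinct elements of $\Gal_{\overline{\bQ_p}/\bQ_p}$; pure class field theory (the tool used earlier in the paper) cannot supply enough such $\rho$, so one must appeal to automorphic Galois representations.

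Concretely, I would work with self-dual, regular algebraic, cuspidal automorphic representations $\pi$ of $\GL_n$ (over $\bQ$ or a suitable CM base) whose ramification is confined to primes in $S$. By work of Harris--Taylor, Shin, Chenevier--Harris and others, each such $\pi$ gives rise to a compatible system of Galois representations $\rho_{\pi,\ell}$ of $\Gal_{\bQ,S}$ (after possibly base-changing to a CM field whose relative discriminant is supported in $S$ and then inducing back), and the restriction to a decomposition group above $p$ matches, via local Langlands, the local component $\pi_p$. The hypothesis $\abs{S_f} \geq 2$ is what permits this construction: one uses an auxiliary prime $q \in S_f \sm \{p\}$ to impose a Steinberg-type local condition, both to force cuspidality and to enable the simple form of the trace formula, while leaving the local component at $p$ free to vary within a rich class.

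The heart of the argument, and the main obstacle, is a density statement: as $\pi$ varies over this family, the local Langlands parameters of the $\rho_{\pi,\ell}|_{D_{\bap}}$ must cover enough of the representation theory of $W_{\bQ_p}$ to separate distinct elements of $\Gal_{\overline{\bQ_p}/\bQ_p}$. Once such density is available, a non-trivial $g$ in the kernel would have to act trivially on every local parameter produced this way, a contradiction; the injectivity of $\Gal_{\overline{\bQ_p}/\bQ_p} \rar D_{\bap,\bQ_S/\bQ}$ then follows. Establishing this density is the technically deepest step, and is carried out by Chenevier--Clozel through the stable trace formula on unitary groups together with Arthur's endoscopic classification; essentially all the real work of the theorem sits here, and it is the reason why the statement, despite being elementary-looking, cannot be proved by the elementary class-field-theoretic methods used for the other results of the present section.
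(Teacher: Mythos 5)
This theorem is quoted in the paper as an external result ([CC] Theorem 5.1) and is given no proof there: the text explicitly remarks that the proof ``is rather involved and uses proven cases of the automorphic base change and results of Harris--Taylor on local Langlands correspondence,'' and simply cites Chenevier--Clozel. So there is no internal argument to measure your proposal against, and your sketch --- which, as you acknowledge, defers the entire substantive content to the cited paper --- is in the same position as the paper itself: it establishes nothing beyond the citation. As an outline of what Chenevier and Clozel actually do, it is broadly accurate: one constructs self-dual, regular algebraic cuspidal automorphic representations of $\GL_n$ unramified outside $S$ with a prescribed (supercuspidal) local component at $p$, uses the auxiliary prime in $S_f \sm \{p\}$ to impose a Steinberg-type condition that makes a simple form of the trace formula available, and then invokes local-global compatibility (Harris--Taylor, Taylor--Yoshida) to read off the local Galois parameter at $\bap$.

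Two points of precision. First, the statement one actually proves is not a ``density of local parameters separating group elements'' but the cleaner assertion that every finite extension of $\bQ_p$ embeds into the localization of $\bQ_S$ at $\bap$, i.e.\ that this localization is algebraically closed; injectivity of $\Gal_{\overline{\bQ_p}/\bQ_p} \tar D_{\bap,\bQ_S/\bQ}$ is immediate from that. Second, the 2009 paper does not rest on Arthur's endoscopic classification or the stable trace formula on unitary groups; it uses the simple twisted trace formula and base change in the style of Arthur--Clozel. Neither point affects correctness of the attribution, which is all the present paper relies on, but if you intend your paragraph as a proof sketch rather than a citation you should be aware that the ``heart of the argument'' you identify is not merely the deepest step --- it is the whole theorem, and nothing in the elementary class-field-theoretic toolkit of this paper reproduces it.
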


Its proof is rather involved and uses proven cases of the automorphic base change and results of Harris-Taylor on local Langlands correspondence. We also remark that to prove this result it is necessary to work with the full group $\Gal_{\bQ,S}$ and not with its maximal solvable quotient. We deduce an immediate corollary:

\begin{cor}\label{cor:CC_cor_real_locs}
Let $K$ be a number field, $p, \ell$ two different rational primes, $S$ a set of primes of $K$, such that $S \supseteq S_p \cup S_{\ell} \cup S_{\infty}$. Then for any $\fp \in S_p$ and $\bap \in S_p(K_S)$ lying over $\fp$, the map 

\[ \Gal_{\overline{K_{\fp}}/K_{\fp}} \tar D_{\bap,K_S/K} \subseteq \Gal_{K,S} \]

\noindent is injective. 
\end{cor}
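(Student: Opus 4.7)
The plan is to deduce the corollary directly from Theorem \ref{thm:Clozel_Chenevier} by descending the situation from $K$ to $\bQ$ with the three-element set $S_0 := \{p,\ell,\infty\}$ of primes of $\bQ$. This $S_0$ contains two finite primes, so the hypothesis $\abs{S_{0,f}} \geq 2$ of that theorem is satisfied.

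First I would fix compatible geometric data: choose an embedding $\overline{K} \inj \overline{K_{\fp}}$ inducing the prime $\bap$, which identifies $\overline{K} = \overline{\bQ}$ as a subfield of $\overline{K_{\fp}} = \overline{\bQ_p}$, and in particular realizes $G_{K_{\fp}} := \Gal_{\overline{K_{\fp}}/K_{\fp}}$ as a finite-index subgroup of $G_{\bQ_p} := \Gal_{\overline{\bQ_p}/\bQ_p}$ inside $G_{\bQ} := \Gal_{\overline{\bQ}/\bQ}$. Via the same embedding, $\bap$ restricts to a prime $\bar{p}_0$ of $\bQ_{S_0}$ lying over $p$.

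The key field-theoretic observation is the inclusion $\bQ_{S_0} \subseteq K_S$. Indeed, $\bQ_{S_0}/\bQ$ is unramified outside $\{p,\ell,\infty\}$, hence the compositum $K \bQ_{S_0}/K$ is unramified outside the primes of $K$ above $\{p,\ell,\infty\}$, i.e.\ outside $S_p \cup S_{\ell} \cup S_{\infty} \subseteq S$. Consequently, inside $G_{\bQ}$ one has
\[ \Gal_{\overline{\bQ}/K_S} \subseteq \Gal_{\overline{\bQ}/\bQ_{S_0}}. \]

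The kernel of the natural map $G_{K_{\fp}} \rar D_{\bap, K_S/K} \subseteq \Gal_{K,S}$ is precisely $G_{K_{\fp}} \cap \Gal_{\overline{\bQ}/K_S}$, where the intersection is taken in $G_{\bQ}$. Combining the inclusions $G_{K_{\fp}} \subseteq G_{\bQ_p}$ and $\Gal_{\overline{\bQ}/K_S} \subseteq \Gal_{\overline{\bQ}/\bQ_{S_0}}$, this kernel is contained in $G_{\bQ_p} \cap \Gal_{\overline{\bQ}/\bQ_{S_0}}$, which is exactly the kernel of the analogous map $G_{\bQ_p} \rar D_{\bar{p}_0, \bQ_{S_0}/\bQ} \subseteq \Gal_{\bQ,S_0}$. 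Theorem \ref{thm:Clozel_Chenevier} (applied to $\bQ$ and $S_0$) forces this latter kernel to be trivial, and the corollary follows. Since the argument is a direct reduction once the inclusion $\bQ_{S_0} \subseteq K_S$ is observed, I do not expect any serious obstacle; the only subtlety is the bookkeeping of chosen embeddings and primes, which is routine.
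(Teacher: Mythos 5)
Your proposal is correct and follows essentially the same route as the paper: introduce $S_0 = \{p,\ell,\infty\}$, observe $\bQ_{S_0} \subseteq K_S$, and invoke Theorem \ref{thm:Clozel_Chenevier} over $\bQ$. The paper phrases the final step by saying the completion $\bQ_{S_0,\bap_0}$ is algebraically closed (hence so is $K_{S,\bap}$), whereas you compare kernels of the restriction maps inside $G_{\bQ}$; these are equivalent formulations of the same reduction.
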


\begin{proof}
Let $S_0 := \{p,\ell, \infty \}$. Then $S_0(K) \subseteq S$ and $\bQ_{S_0} \subseteq K_{S_0(K)} \subseteq K_S$. By Theorem \ref{thm:Clozel_Chenevier} of Clozel-Chenevier, $\bQ_{S_0}/\bQ$ realizes the maximal local extensions at $p$, i.e., for any extension $\bap_0$ of $p$ to $\bQ_{S_0}$, the field $\bQ_{S_0, \bap_0}$ is algebraically closed. hence for any $\bap \in S_p(K_S)$ with restriction $\bap_0$ to the subfield $\bQ_{S_0}$, the field $K_{S, \bap} \supseteq \bQ_{S_0,\bap_0}$ is also algebraically closed. This finishes the proof.
% Let $S_0 := \{p,\ell, \infty \}$. Then $S_0(K) \subseteq S$. By \cite{CC} Theorem 5.1, $\bQ_{S_0}/\bQ$ realizes the maximal local extensions at $p$, i.e., for any extension $\bap_0$ of $p$ to $\bQ_{S_0}$, the field $\cup_{\bQ_{S_0}/L/\bQ} L_{\bap_0|_L}$ is algebraically closed (remark that $\bQ_{S_0, \bap_0}$ is equal to its completion, but must in general not be equal to it).
\end{proof}

Using this, we deduce from Proposition \ref{prop:notopenintersec} the following analog of \cite{NSW} 12.1.4 for $\Gal_S$:

\begin{cor}\label{cor:FKSnormalizerproperty}\mbox{}
\begin{itemize}
\item[(i)] If $p \in \caO_{K,S}^{\ast}, \bap \in (S_f \sm S_p)(K_S)$ and $H \subseteq D_{\bap}$ a closed subgroup, such that $H \cap D_{\bap,p} \subseteq D_{\bap,p}$ is open for some $p$-Sylow subgroup $D_{\bap,p} \subseteq D_{\bap}$, then $\N_{\Gal_S}(H) \subseteq D_{\bap}$.
\item[(ii)] Assume that at least two rational primes lie in $\caO_{K,S}^{\ast}$. Then the intersection of two distinct decomposition subgroups in $\Gal_S$ of primes in $S_f(K_S)$ is not open in any of them.
\end{itemize}
\end{cor}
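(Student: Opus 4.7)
For part (i), I would start with an arbitrary $\sigma \in \N_{\Gal_S}(H)$ and aim to show $\sigma\bap = \bap$. Conjugation gives $H = \sigma H \sigma^{-1} \subseteq \sigma D_{\bap} \sigma^{-1} = D_{\sigma\bap}$, so intersecting with the $p$-Sylow of the hypothesis yields
\[
H \cap D_{\bap,p} \;\subseteq\; D_{\bap,p} \cap D_{\sigma\bap}.
\]
The right-hand side is pro-$p$, hence lies in some $p$-Sylow $D_{\sigma\bap,p} \subseteq D_{\sigma\bap}$, and so $D_{\bap,p} \cap D_{\sigma\bap,p}$ is open in $D_{\bap,p}$. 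Since $\sigma$ acts trivially on $K$, the prime $\sigma\bap$ restricts to the same $\fp \nmid p$ as $\bap$, so neither of $\bap, \sigma\bap$ lies in $S_p$; Proposition~\ref{prop:notopenintersec} applied with the rational prime $p$ then forces $\sigma\bap = \bap$, i.e.\ $\sigma \in D_{\bap}$.

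For part (ii), let $p$ and $\ell$ denote two rational primes in $\caO_{K,S}^{\ast}$ and let $\bap \neq \bar{\fq} \in S_f(K_S)$. By symmetry it is enough to show $D_{\bap} \cap D_{\bar{\fq}}$ is not open in $D_{\bap}$. I would first dispose of the easy case: if some $q \in \{p,\ell\}$ is a unit in $\caO_{K_S, S \sm \{\bap,\bar{\fq}\}}$, i.e.\ neither $\bap$ nor $\bar{\fq}$ lies above $q$, then Proposition~\ref{prop:notopenintersec} applies directly with that $q$. Otherwise, each of $p, \ell$ is the residue characteristic of one of $\bap, \bar{\fq}$; after possibly swapping $p$ with $\ell$, this reduces to the single remaining case $\bap \mid p$ and $\bar{\fq} \mid \ell$.

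For this delicate subcase I would invoke Corollary~\ref{cor:CC_cor_real_locs}, which under the two-rational-primes hypothesis identifies $D_{\bap}$ with the full absolute Galois group $\Gal_{\overline{K_\fp}/K_\fp}$. Supposing $D_{\bap} \cap D_{\bar{\fq}}$ were open in $D_{\bap}$, I intersect with a $p$-Sylow $D_{\bap,p}$ of $D_{\bap}$ and push the resulting pro-$p$ group into some $p$-Sylow $D_{\bar{\fq},p}$ of $D_{\bar{\fq}}$, obtaining an open subgroup
\[
H := D_{\bap,p} \cap D_{\bar{\fq},p} \;\subseteq\; D_{\bap,p}
\]
that is simultaneously a closed subgroup of $D_{\bar{\fq},p}$. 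Because $\bar{\fq} \mid \ell \neq p$, the group $D_{\bar{\fq},p}$ is of $p$-decomposition type (Lemma~\ref{lm:CFTnaslujbe} and Section~\ref{sec:cftapp_locsit}), so Lemma~\ref{lm:pdectypelm}(i) forces $H$ to be either $\bZ_p$ or itself of $p$-decomposition type; in particular $H$ is topologically finitely generated of rank at most $2$. On the other hand, $D_{\bap}$ being the full local group contains the wild inertia $P_{\fp}$, which is normal pro-$p$ and hence lies in every $p$-Sylow; by Iwasawa's theorem $P_{\fp}$ is free pro-$p$ of countably infinite rank. Since $H \cap P_{\fp}$ has finite index in $P_{\fp}$, the Nielsen--Schreier formula shows that it is itself free pro-$p$ of infinite rank, forcing $H$ to be topologically infinitely generated — a contradiction.

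I expect the main obstacle to lie precisely in this last subcase: part (i) and the first part of (ii) are direct consequences of Proposition~\ref{prop:notopenintersec}, but the configuration $\bap \mid p$, $\bar{\fq} \mid \ell$ is not covered by that proposition and requires the extra input of Clozel--Chenevier — to access the full local Galois group at a prime above the residue characteristic — combined with Iwasawa's description of wild inertia, which delivers an invariant (topological generation) that a rank-$2$ Demushkin group cannot accommodate.
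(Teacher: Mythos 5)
Your part (i) and the case division in part (ii) follow the paper's own proof essentially verbatim: (i) is the conjugation argument $H = \sigma H\sigma^{-1} \subseteq D_{\sigma\bap}$ reduced to Proposition~\ref{prop:notopenintersec} (your extra remark that $\sigma\bap$ and $\bap$ restrict to the same prime of $K$, so the hypothesis $p \in \caO^{\ast}_{K_S,S\sm\{\bap,\sigma\bap\}}$ is really satisfied, is a welcome precision the paper leaves implicit), and in (ii) the only case not killed by Proposition~\ref{prop:notopenintersec} is indeed $\bap \mid p$, $\baq \mid \ell$, where Corollary~\ref{cor:CC_cor_real_locs} must be invoked. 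For that residual case the paper argues directly with the full groups: an open subgroup of $D_{\bap}$ is again a $p$-adic local Galois group, hence contains free pro-$p$ subgroups of every finite rank, which $D_{\baq}$ (an $\ell$-adic local Galois group, $\ell \neq p$) does not. Your variant via $p$-Sylow subgroups and Iwasawa's description of wild inertia is a legitimate cousin of this argument.

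However, your very last inference is false as written: from the fact that $H := D_{\bap,p}\cap D_{\baq,p}$ contains the infinite-rank free pro-$p$ group $H\cap P_{\fp}$ as a \emph{closed} subgroup you conclude that $H$ is topologically infinitely generated. A topologically $2$-generated pro-$p$ group can perfectly well contain closed (non-open) subgroups of infinite rank --- the free pro-$p$ group of rank $2$ already contains free closed subgroups of countably infinite rank --- so "contains an infinitely generated closed subgroup" does not imply "infinitely generated". The contradiction you want is available one line earlier and does not pass through the generation of $H$ at all: $H\cap P_{\fp}$ is a nontrivial closed subgroup of the group $D_{\baq,p}$ of $p$-decomposition type, so by Lemma~\ref{lm:pdectypelm}(i) it must be isomorphic to $\bZ_p$ or be a rank-$2$ Demushkin group; a free pro-$p$ group of countably infinite rank is neither (it is not procyclic, and it has trivial $\coh^2$ with $\bZ/p\bZ$-coefficients, so it is not Demushkin). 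You should also note, for completeness, that $H\cap P_{\fp}$ is nontrivial because $P_{\fp}$ is infinite and $H\cap P_{\fp}$ is open in it, and that $P_{\fp}\subseteq D_{\bap,p}$ because a closed normal pro-$p$ subgroup lies in every $p$-Sylow subgroup. With that one repair your argument is complete and equivalent in strength to the paper's.
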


\begin{proof}
(i): Let $x \in \N_{\Gal_S}(H)$. Then $H = xHx^{-1} \subseteq xD_{\bap}x^{-1} = D_{x\bap}$. Thus $D_{\bap} \cap D_{x\bap} \supseteq H$ contains an open subgroup of a $p$-Sylow subgroup of $D_{\bap}$. Proposition \ref{prop:notopenintersec} implies $x\bap = \bap$, i.e., $x \in D_{\bap}$.

(ii): By Proposition \ref{prop:notopenintersec}, the only case to consider, is $S_p \cup S_{\ell} \subseteq S$, $\bap \in S_p$, $\bar{\fq} \in S_{\ell}$ with $p \neq \ell$ (and there is no further prime to compare $D_{\bap}$ with $D_{\bar{\fq}}$). Assume $D_{\bap} \cap D_{\bar{\fq}} \subseteq D_{\bap}$ is open. By Corollary \ref{cor:CC_cor_real_locs}, $D_{\bap}$ resp. $D_{\baq}$ is isomorphic to the absolute Galois group of a $p$-adic resp. $\ell$-adic field. Hence also the open subgroup $D_{\bap} \cap D_{\bar{\fq}}$ of $D_{\bap}$ is isomorphic to a Galois group of a $p$-adic field. Hence $D_{\bap} \cap D_{\bar{\fq}}$ contains free pro-$p$-subgroups of any finite rank. But $D_{\bar{\fq}}$ does not, and we get a contradiction.
\end{proof}

%************************************************************************************************************************************************************
%************************************************************************************************************************************************************

\subsection{Intersection of decomposition subgroups at good primes} \label{sec:interdecsatgoodprimes}

Let $K$ be a number field and $S \supseteq S_p \cup S_{\infty}$ a finite set of primes. Arguments in this section make only use of abelian $p$-extensions, so we work with $\Gal_S^{\ab}(p)$ instead of $\Gal_S$. Let $K_S^{\ab}(p)$ denote the corresponding subfield of $K_S$. For short, we write $D_{\bap}$ for $D_{\bap, K_S^{\ab}(p)/K}$. We consider the intersections of decomposition subgroups at primes outside $S$.  Observe first, that if $\bap \in \Sigma_{K_S^{\ab}(p)} \sm S$, then we have natural surjections:

\[ \hat{\bZ} \tar D_{\bap} \tar \bZ_p. \]

\noindent Indeed, the first surjection holds, since $\bap|_K$ is unramified with finite residue field and the second due to the assumption on $S$ and the existence of the cyclotomic $p$-extension. We will use the infinite version of the Chebotarev density theorem to prove the following result. Let $\delta_K$ denote the Dirichlet density on $K$.

\begin{prop} Let $p$ be a rational prime, $S$ a finite set of primes of $K$ with $S_p \cup S_{\infty} \subseteq S$. Assume that $K$ is not totally real. Let $\bap \in \Sigma_{K_S^{\ab}(p)} \sm S$ and $\fp = \bap|_K$. Then there is a set $T_{\fp} \subseteq \Sigma_K \sm S$ with $\delta_K(T_{\fp}) = 1$, such that for all $\fq \in T_{\fp}$ and all extensions $\bar{\fq}$ of $\fq$ to $M$, the following holds:

\[ D_{\bap,p} \cap D_{\bar{\fq}, p} = 1. \]

\noindent In particular, the intersection of $D_{\bap}$ and $D_{\baq}$ is not open in each of them.
\end{prop}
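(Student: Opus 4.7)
The plan is to set $G := \Gal_S^{\ab}(p)$ and identify a closed subgroup $H \subseteq G$ of infinite index such that the bad condition $D_{\bap,p} \cap D_{\baq,p} \ne 0$ forces $\Frob_{\fq} \in H$; then Chebotarev finishes. First, since $\fp \notin S$, the prime $\bap$ is unramified in $M := K_S^{\ab}(p)/K$, so $D_{\bap}$ is procyclic. The surjection $\hat{\bZ} \tar D_{\bap} \tar \bZ_p$ noted just before the statement (using $S_p \cup S_{\infty} \subseteq S$), combined with $D_{\bap}$ being pro-$p$ as a subgroup of the pro-$p$ group $G$, forces $D_{\bap} \cong \bZ_p$ and $D_{\bap,p} = D_{\bap}$; the same holds for $D_{\baq} = D_{\baq,p}$ whenever $\fq \in \Sigma_K \sm S$ and $\baq \mid \fq$.

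Next I would use that $K$ is not totally real, so $r_2(K) \geq 1$, together with the standard class field theoretic bound $\rank_{\bZ_p} \Gal_{K,S}^{\ab}(p) \geq r_2(K) + 1$ (cf. \cite{NSW} Ch.~X--XI), to obtain $\rank_{\bZ_p} G \geq 2$. As $G$ is moreover a finitely generated $\bZ_p$-module, $G/D_{\bap}$ has $\bZ_p$-rank $\geq 1$ and finite torsion subgroup $(G/D_{\bap})_{\tors}$. Writing $\pi \colon G \tar G/D_{\bap}$, put $H := \pi^{-1}((G/D_{\bap})_{\tors})$. Then $H$ is closed of $\bZ_p$-rank $1$, and $G/H$ is free of positive rank, so $[G:H] = \infty$.

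A short check shows that for $\fq \in \Sigma_K \sm S$ and any $\baq \mid \fq$ in $M$, the decomposition group $D_{\baq} = \overline{\langle \Frob_{\fq} \rangle}$ (independent of the choice of $\baq$, since $G$ is abelian) satisfies
\[
 D_{\bap} \cap D_{\baq} \ne 0 \iff \Frob_{\fq} \in H \sm G_{\tors}:
\]
some non-zero $\bZ_p$-multiple of $\Frob_{\fq}$ lies in $D_{\bap}$ precisely when $\pi(\Frob_{\fq})$ is torsion, and if $\Frob_{\fq}$ itself is torsion then the intersection embeds in the torsion-free group $D_{\bap}$ and vanishes. In particular, $\Frob_{\fq} \notin H$ implies $D_{\bap} \cap D_{\baq} = 0$.

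Finally apply Chebotarev: for every open $U \subseteq G$ with $U \supseteq H$, finite Chebotarev gives $\delta_K(\{\fq : \Frob_{\fq} \in U\}) = [G:U]^{-1}$, and since $H$ is the intersection of such $U$ and $[G:H] = \infty$, one concludes that $T_{\fp} := \{\fq \in \Sigma_K \sm S : \Frob_{\fq} \notin H\}$ has Dirichlet density $1$. The ``in particular'' assertion is then immediate since $D_{\bap} \cong \bZ_p$ admits no non-zero finite quotients. The main subtlety I anticipate is pinpointing where the hypothesis ``$K$ not totally real'' is used: it enters only at the bound $\rank_{\bZ_p} G \geq 2$, without which one would have $H = G$ and no density argument could give a density-$1$ set $T_{\fp}$.
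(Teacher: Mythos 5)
Your proof is correct and follows essentially the same strategy as the paper's: both arguments confine the Frobenii for which the intersection could be non-trivial to the $\bZ_p$-saturation of $D_{\bap}$, a closed subgroup of infinite index (rank $1$ inside rank $\geq 2$, which is where $r_2(K)\geq 1$ enters), and then invoke Chebotarev to see that such Frobenii occur with density zero. The only difference is technical: the paper passes to an auxiliary free quotient $H\cong\bZ_p^2$ in which $\fp$ is not completely split and applies the Haar-measure form of Chebotarev for infinite extensions, whereas you stay inside $\Gal_S^{\ab}(p)$ and deduce density zero from finite-level Chebotarev applied to open subgroups of arbitrarily large index containing your $H$ --- an equally valid and arguably slightly more elementary route.
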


\begin{proof} Since $K$ is not totally real, $r_2(K) \geq 1$ and hence $\rank_{\bZ_p} \Gal_S^{ab, p} \geq 2$ by \cite{NSW} 10.3.20. Let $H \cong \bZ_p^2$ be some quotient of $\Gal_S^{\ab}(p)$ with corresponding field $L \subseteq K_S^{\ab}(p)$, such that $\fp$ is not completely split in $L$ (such quotient exists due to the cyclotomic extension). Since $H$ is torsion-free, this implies that the composition $D_{\bap,p} \har \Gal_S^{\ab}(p) \tar H$ is injective, i.e., $D_{\bap,p} \tar D_{\bap, L/K}$ is an isomorphism.

We have $\bZ_p \cong D_{\bap,L/K} \subseteq H$. Consider $H \har H \otimes_{\bZ_p} \bQ_p$, and let $N := H \cap (D_{\bap,L/K} \otimes_{\bZ_p} \bQ_p)$, the intersection taken in $H \otimes_{\bZ_p} \bQ_p$. Then $N$ being compact and closed subgroup of $D_{\bap,L/K} \otimes_{\bZ_p} \bQ_p \cong \bQ_p$ is isomorphic to $\bZ_p$. Let $\mu$ be the Haar measure on $H$, such that $\mu(H) = 1$. Then $\mu(N) = 0$ and hence $\mu(H \sm N) = 1$ and $\mu(\partial(H \cap N)) = \mu(N) = 0$. By Chebotarev density theorem for infinite extensions, the set $T_{\fp}$ of primes of $K$, lying outside $S$, whose Frobenius lies in $H \sm N$ has density $1$, and thus satisfies the requirements of the proposition.
\end{proof}

%************************************************************************************************************************************************************
%************************************************************************************************************************************************************

%************************************************************************************************************************************************************
%************************************************************************************************************************************************************

%************************************************************************************************************************************************************
%************************************************************************************************************************************************************

\section{Modified argument of Neukirch} \label{sec:MTproof}

In this section we prove Theorem \ref{thm:decequivcondsgeneral}. Therefore we use a modification of Neukirch's argument involving Brauer groups (cf. \cite{Ne} Theorem 1). From now on until the end of this section, we permanently assume that $K$ is a number field, $S \supseteq S_{\infty}$ is a finite set of primes of $K$, that there are at least two rational primes under $S$ and that $p$ denotes one of them. 

%The plan is as follows: in Section \ref{sec:somelemmas} we prove some technical lemmas and in Sections \ref{sec:cyclocharanddecgroups}, \ref{sec:classgroupobstranddecgroups} we give the proof of the theorem.

%************************************************************************************************************************************************************
%************************************************************************************************************************************************************

\subsection{Local invariants}\label{sec:localanabprop}

For convenience we recall briefly the local situation. Local fields are not anabelian (cf. \cite{NSW} Remark before 12.2.7). This means that one can construct two non-isomorphic local fields $\kappa \not\cong \kappa^{\prime}$ with isomorphic absolute Galois groups: $\Gal_{\kappa} \cong \Gal_{\kappa^{\prime}}$. Nevertheless, the following invariants of $\kappa$ can be recovered from $\Gal_{\kappa}$: the characteristics $\charac{\kappa}$ of $\kappa$ and $\charac{\bar{\kappa}}$ of the residue field $\bar{\kappa}$, the cardinality $\abs{\bar{\kappa}}$ of $\bar{\kappa}$, the absolute degree $[\kappa: \bQ_p]$, if $\kappa$ is $p$-adic, the inertia and the wild inertia subgroups $V_{\kappa} \subset I_{\kappa} \subset \Gal_{\kappa}$, the Frobenius class $\Frob_{\kappa} \in \Gal_{\kappa}/I_{\kappa}$, the multiplicative group $\lambda^{\ast}$ of any finite extension $\lambda/\kappa$, the cyclotomic character $\chi_{\rm cycl}$ on $\Gal_{\kappa}$.

These invariants can be recovered using the cohomology with finite coefficients of $\Gal_{\kappa}$, the local reciprocity law and the structure of the tame quotient of $\Gal_{\kappa}$. This material is essentially covered by \cite{NSW}. Further we have a (reformulation of a) nice lemma, proven by Neukirch:

\begin{lm}[cf. \cite{Ne} Korollar 1]\label{lm:locfinindex}
Let $L, M$ be two local fields with $L$ $p$-adic, and assume an injection $\Gal_L \subseteq \Gal_M$ is given. Then $M$ is $p$-adic too, and $\Gal_L$ is of finite index in $\Gal_M$. Further $[M \colon \bQ_p] \leq [L \colon \bQ_p]$.
\end{lm}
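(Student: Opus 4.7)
The plan is to proceed in three steps: (1) show $M$ is $p$-adic; (2) show $\Gal_L$ has finite index in $\Gal_M$; and (3) deduce the bound $[M:\bQ_p]\leq[L:\bQ_p]$ from an Euler--Poincar\'e characteristic argument.

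For step (1), I would argue using the characteristic wild inertia subgroup $V_L\subseteq\Gal_L$. Since $L$ is $p$-adic, a theorem of Iwasawa says that $V_L$ is a free pro-$p$ group of countably infinite rank; in particular $V_L$ is neither $\bZ_p$ nor of $p$-decomposition type. As a pro-$p$ subgroup of $\Gal_M$, $V_L$ sits (after $\Gal_M$-conjugation) inside a pro-$p$ Sylow subgroup $\Gal_{M,p}\subseteq\Gal_M$. If the residue characteristic of $M$ were some $\ell\neq p$, then by Section~\ref{sec:cftapp_locsit} the group $\Gal_{M,p}$ is of $p$-decomposition type; Lemma~\ref{lm:pdectypelm}(i) would then force its closed subgroup $V_L$ to be either $\bZ_p$ or of $p$-decomposition type, a contradiction. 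Hence $M$ is $p$-adic.

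For step (2), both $\Gal_L$ and $\Gal_M$ are now absolute Galois groups of $p$-adic local fields, in particular both are topologically finitely generated. Set $\tilde L:=\bar M^{\iota(\Gal_L)}\subseteq\bar M$; then $\tilde L/M$ is algebraic and $\Gal_{\tilde L}=\iota(\Gal_L)\cong\Gal_L$, and it suffices to exclude $[\tilde L:M]=\infty$. The cleanest approach is via completions: by the Ax--Sen--Tate theorem, $\Gal_{\tilde L}\cong \Gal_{\hat{\tilde L}}$, where $\hat{\tilde L}\subseteq\bC_p$ is the completion. If $\tilde L/M$ were infinite, $\hat{\tilde L}$ would be a complete non-archimedean valued field of residue characteristic $p$ that is not a local field (it would have either non-discrete value group or infinite residue field). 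After replacing $M$ by a finite subextension one may arrange $\mu_p\subseteq \hat{\tilde L}$, and then Kummer theory gives $\dim_{\bF_p}H^1(\Gal_{\hat{\tilde L}},\bF_p)=\dim_{\bF_p}\hat{\tilde L}^\ast/\hat{\tilde L}^{\ast p}=\infty$, so $\Gal_{\hat{\tilde L}}$ is not topologically finitely generated. This contradicts topological finite generation of $\Gal_L$, so $[\tilde L:M]<\infty$ and $\Gal_L$ is open in $\Gal_M$.

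Finally, for step (3), with $\Gal_L$ open of finite index $n:=[\Gal_M:\Gal_L]$ in $\Gal_M$, the mod-$p$ Euler--Poincar\'e characteristic is multiplicative along open subgroups: $\chi(\Gal_L,\bF_p)=n\cdot\chi(\Gal_M,\bF_p)$. Tate's local formula yields $\chi(\Gal_K,\bF_p)=-[K:\bQ_p]$ for any $p$-adic local field $K$, and therefore $[L:\bQ_p]=n\cdot[M:\bQ_p]\geq[M:\bQ_p]$, as required. The main obstacle is step~(2): cohomological dimension invariants alone do not suffice, since for instance $\tilde L=M(\mu_{\ell^\infty})$ with $\ell\neq p$ gives $\scd_p(\Gal_{\tilde L})=2$ even though $[\tilde L:M]=\infty$; what really forces the finite index is the rigidity of topological finite generation under completion, i.e., the characterisation of local fields (among complete non-archimedean fields of residue characteristic $p$ containing $\mu_p$) by finiteness of $K^\ast/K^{\ast p}$.
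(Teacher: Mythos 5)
Your steps (1) and (3) are essentially sound, with one omission in (1): ruling out residue characteristic $\ell\neq p$ for $M$ only shows that $M$ has residue characteristic $p$, and you still have to exclude the equal-characteristic case $M\cong\bF_q((t))$. That case is immediate from $\cd_p\Gal_M\leq 1$ for fields of characteristic $p$ versus $\cd_p\Gal_L=2$, but your Demushkin argument does not cover it. The Euler--Poincar\'e computation in step (3) is correct and in fact gives the divisibility $[M:\bQ_p]\mid[L:\bQ_p]$.

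The genuine gap is in step (2). The ``characterisation of local fields among complete non-archimedean fields of residue characteristic $p$ containing $\mu_p$ by finiteness of $K^\ast/K^{\ast p}$'', which you yourself identify as the crux, is false. The field $\bC_p$ is one counterexample; a more telling one is the fixed field $F$ of a pro-$\ell$-Sylow subgroup of $\Gal_{\bQ_p(\mu_p)}$ for a prime $\ell\neq p$: its completion is complete of residue characteristic $p$, contains $\mu_p$, is not local, and yet $F^\ast/F^{\ast p}=\coh^1(\Gal_F,\mu_p)=0$ because $\Gal_F$ is pro-$\ell$. The underlying phenomenon is that when the $p$-part of the supernatural degree $[\tilde L:M]$ is infinite, elements become $p$-th powers up the tower and $\tilde L^\ast/\tilde L^{\ast p}$ can collapse; finiteness of $\coh^1(\cdot,\bF_p)$ (equivalently, topological finite generation) therefore does not by itself force $[\tilde L:M]<\infty$. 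The missing input is an $\coh^2$/Brauer-group argument, which is how the source the paper cites (\cite{NSW} 12.1.9, going back to \cite{Ne} Korollar 1) proceeds: for a tower $M\subseteq M_i\subseteq\tilde L$ one has $\Br(\tilde L)=\dirlim_i\Br(M_i)$ with transition maps given by multiplication by the degrees, so $\Br(\tilde L)[p]=0$ as soon as $p^\infty$ divides $[\tilde L:M]$; since $\coh^2(\Gal_{\tilde L},\mu_p)=\Br(\tilde L)[p]$ must match $\coh^2(\Gal_L,\mu_p)\cong\bZ/p\bZ$ (after shrinking to an open subgroup whose fixed fields on both sides contain $\mu_p$, so that the coefficients become the trivial module $\bZ/p\bZ$ and the comparison is purely group-theoretic), the $p$-part of $[\tilde L:M]$ is finite. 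Only after this reduction does your $\coh^1$-growth idea apply: once $[\tilde L:M]$ is prime to $p$, the restriction maps $\coh^1(\Gal_{M_i},\bF_p)\rar\coh^1(\Gal_{\tilde L},\bF_p)$ are injective and $\dim_{\bF_p}\coh^1(\Gal_{M_i},\bF_p)\geq[M_i:\bQ_p]+1$ is unbounded, contradicting finiteness. So your strategy is salvageable, but precisely the step you flag as decisive is the one that is not established.
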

\begin{proof} 
A proof can be found at the end of the proof of \cite{NSW} 12.1.9. 
\end{proof}

%************************************************************************************************************************************************************
%************************************************************************************************************************************************************

\subsection{Some lemmas} \label{sec:somelemmas}

\begin{lm}\label{lm:notcontained}
Let $p$ be a rational prime. Let $\Gal_{\kappa}$ be the absolute Galois group of a local field $\kappa$, $H \subset \Gal_{\kappa}$ a subgroup of $p$-decomposition type. Then $\kappa$ is not $p$-adic.
\end{lm}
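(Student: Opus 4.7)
I argue by contradiction, assuming $\kappa$ is $p$-adic and $H \subseteq \Gal_\kappa$ is of $p$-decomposition type. The main tool is the wild inertia $V_\kappa \triangleleft \Gal_\kappa$: for a $p$-adic local field $\kappa$, the subgroup $V_\kappa$ is a free pro-$p$ group (of countably infinite rank), and the tame quotient $\Gal_\kappa/V_\kappa \cong \hat{\bZ}^{(p')} \rtimes \hat{\bZ}$ has as its maximal pro-$p$ quotient only the copy of $\bZ_p$ sitting inside the Frobenius factor $\hat{\bZ}$. Since $H$ is pro-$p$, its image in $\Gal_\kappa/V_\kappa$ is pro-$p$ and hence contained in this $\bZ_p$, so $H/(H\cap V_\kappa)$ embeds into $\bZ_p$ and is either trivial or isomorphic to $\bZ_p$.

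\emph{Case~1: $H \subseteq V_\kappa$.} Then $H$ is a closed subgroup of a free pro-$p$ group, hence itself free pro-$p$. But $H$ is non-abelian Demushkin of rank $2$, so $\coh^2(H,\bF_p) \neq 0$ and therefore $\cd_p(H)=2$, contradicting $\cd_p \leq 1$ for any free pro-$p$ group.

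\emph{Case~2: $H/(H\cap V_\kappa) \cong \bZ_p$.} Then $H \cap V_\kappa$ is a closed normal subgroup of $H$ with quotient isomorphic to $\bZ_p$, and by the uniqueness clause of Lemma~\ref{lm:pdectypelm}(iii) this forces $H \cap V_\kappa = H_n \cong \bZ_p$. Thus $H_n$ sits inside the free pro-$p$ group $V_\kappa$ as a non-trivial pro-cyclic closed subgroup. To finish I pick topological generators $\tau \in H_n$ and $\sigma \in H/H_n$ (lifted to $H$); by the definition of $p$-decomposition type, $\sigma\tau\sigma^{-1}=\tau^{\chi_H(\sigma)}$ with $\chi_H(\sigma)\in\bZ_p^\ast\setminus\{1\}$. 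After replacing $H$ by a suitable open subgroup (again of $p$-decomposition type by Lemma~\ref{lm:pdectypelm}(ii), whose $H_n$ behaves correctly by (iv)) one may further assume $\chi_H(\sigma) \in 1+p\bZ_p$. The image of $\sigma$ in $\Gal_\kappa/V_\kappa$ lies in the Frobenius $\bZ_p$-factor, so conjugation by $\sigma$ on $V_\kappa$ agrees, up to an inner automorphism of $V_\kappa$, with a power of the Frobenius action. Combining the classical fact that in a free pro-$p$ group the centralizer of a non-trivial element is pro-cyclic with the known constraints on the Frobenius action on the abelianization $V_\kappa^{\mathrm{ab}}$ in the $p$-adic case (whose eigenvalues are Weil numbers, in particular incompatible with any value in $1+p\bZ_p\setminus\{1\}$) yields the desired contradiction.

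\textbf{The main obstacle} is precisely this final step of Case~$2$: making rigorous the incompatibility between the scalar conjugation relation forced by the Demushkin structure of $H$ and the constraints imposed on the Frobenius action on the wild inertia of a $p$-adic local field. Steps leading up to it (the dichotomy, Case~$1$, the identification $H \cap V_\kappa = H_n$) are formal and use only Lemma~\ref{lm:pdectypelm} and the standard structure theory of $\Gal_\kappa$.
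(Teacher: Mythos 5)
Your reduction to the wild inertia $V_\kappa$, the dichotomy on the image of $H$ in the tame quotient, Case~1, and the identification $H\cap V_\kappa=H_n$ are all correct. But the proof has a genuine gap exactly where you flag it: Case~2 is not closed, and the route you sketch for closing it does not work. First, ``the Frobenius action on $V_\kappa^{\mathrm{ab}}$'' is not well defined: a Frobenius lift in $\Gal_{\kappa}$ is only determined modulo the full inertia subgroup, which acts nontrivially on $V_\kappa^{\mathrm{ab}}$; moreover the image of your $\sigma$ in $\Gal_{\kappa}/V_\kappa\cong\hat{\bZ}^{(p')}\rtimes\hat{\bZ}$ need not lie in the Frobenius factor (it only generates a procyclic pro-$p$ subgroup, which may have a nontrivial tame-inertia component), so conjugation by $\sigma$ on $V_\kappa$ is not ``a power of Frobenius up to inner automorphism.'' Second, there is no standard result asserting that the eigenvalues of such an action on the infinite-rank module $V_\kappa^{\mathrm{ab}}\otimes\bQ_p$ are Weil numbers, nor that they avoid $1+p\bZ_p\smallsetminus\{1\}$. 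Third, even granting full spectral control on $V_\kappa^{\mathrm{ab}}$, your relation $\sigma\tau\sigma^{-1}=\tau^{\chi_H(\sigma)}$ holds in $V_\kappa$ itself, and the image of $\tau$ in $V_\kappa^{\mathrm{ab}}$ may vanish (e.g.\ if $\tau$ lies in the closed commutator subgroup), in which case the abelianized statement is vacuous. So the decisive step of Case~2 is missing and the proposed completion is not salvageable as stated.

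For comparison, the paper's proof avoids $V_\kappa$ entirely and stays at finite level, where the relevant groups are classified: one chooses an open subgroup $H\subseteq U\subseteq\Gal_{\kappa}$ such that the image of $H$ in the maximal pro-$p$ quotient $U(p)$ is not procyclic; since $U(p)$ is torsion-free (being free pro-$p$ or a Demushkin group of rank $[\lambda:\bQ_p]+2>2$ by \cite{NSW} 7.5.11), Lemma~\ref{lm:pdectypelm} forces the map $H\rar U(p)$ to be injective, and then $U(p)$ free contradicts $\cd_p H=2$ while $U(p)$ Demushkin of rank $>2$ contradicts Lemma~\ref{lm:Demsubgroupsrankgrowth}. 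Note that this single argument subsumes both of your cases; I would recommend replacing your Case~2 (or the whole argument) by it, since the finite-level pro-$p$ quotients $U(p)$ are completely understood whereas the conjugation action on $V_\kappa$ is not controlled in the way your sketch requires.
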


\begin{proof}
Suppose $\kappa$ is $p$-adic. First, we choose some $H \subset U \subseteq \Gal_{\kappa}$ with last inclusion open, such that the image of $H$ in $U(p)$ is not (pro-)cyclic. Indeed, choose an open normal subgroup $V \triangleleft \Gal_{\kappa}$ such that $H/H \cap V$ is not (pro-)cyclic. Then let $U$ be the preimage under $\Gal_{\kappa} \tar \Gal_{\kappa}/V$ of the $p$-subgroup $H/H \cap V$.

Now, by \cite{NSW} 7.5.11, $U(p)$ is either free or a Demushkin group of rank $[\lambda : \bQ_p] + 2 > 2$, where $\lambda$ is the local field corresponding to $U$. In both cases
$U(p)$, being of finite cohomological dimension, is torsion-free, hence the image of $H$ in $U(p)$ is torsion-free, hence $H$ embeds into $U(p)$ (using Lemma \ref{lm:pdectypelm}, one sees that the kernel of the map $H \rar U(p)$ can only be the trivial subgroup of $H$). Now, $U(p)$ can neither be free: this contradicts $\cd_p H = 2$, nor a Demushkin group of rank $>2$: this contradicts Lemma \ref{lm:Demsubgroupsrankgrowth}. This finishes the proof.
\end{proof}

\begin{lm}\label{lm:Demsubgroupsrankgrowth}
Assume $H_m, H_n$ are two Demushkin pro-$p$-groups of ranks $m,n \geq 2$ respectively. If there is an inclusion $H_m \subseteq H_n$, then it is automatically open and $m = (H_n:H_m)(n-2) + 2$. In particular, $m \geq n$.
\end{lm}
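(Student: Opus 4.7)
The plan is to exploit the fact that a Demushkin pro-$p$ group of rank $\geq 2$ is a Poincar\'e duality pro-$p$ group of dimension two (a PD$^2$-group at $p$); both the openness assertion and the rank formula then reduce to standard facts about cohomological dimension and the Euler--Poincar\'e characteristic.

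First I would establish that $H_m$ has finite index in $H_n$ (equivalently, is open, as both are pro-$p$). Because $H_n$ is PD$^2$, every closed subgroup of infinite index has $p$-cohomological dimension at most $1$; this is the standard lowering of $\cd$ in a pro-$p$ duality group, see the discussion of pro-$p$ duality groups in \cite{NSW}~\S3.7. On the other hand $H_m$ is Demushkin of rank $\geq 2$, so by definition $\coh^2(H_m,\bF_p)$ is one-dimensional and in particular non-zero; hence $\cd H_m = 2$. This contradicts infinite index, so $H_m \subseteq H_n$ must be open.

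Once openness is in hand, the rank formula drops out of the Euler--Poincar\'e characteristic. For a Demushkin pro-$p$ group $H_r$ of rank $r$ one has $\dim_{\bF_p}\coh^0(H_r,\bF_p) = \dim_{\bF_p}\coh^2(H_r,\bF_p) = 1$ and $\dim_{\bF_p}\coh^1(H_r,\bF_p) = r$, so $\chi(H_r) = 2 - r$. Multiplicativity of $\chi$ for open subgroups then gives $\chi(H_m) = (H_n:H_m)\cdot\chi(H_n)$, i.e.\ $2 - m = (H_n:H_m)(2-n)$, which rearranges to $m = (H_n:H_m)(n-2) + 2$; the inequality $m \geq n$ is immediate from $n \geq 2$ and $(H_n:H_m) \geq 1$.

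The only serious input is the duality lemma that infinite-index closed subgroups in a PD$^2$ pro-$p$ group have $\cd \leq 1$; everything else is a bookkeeping calculation with cohomological dimensions. Accordingly, I do not anticipate a real obstacle beyond pinning down the exact citation for that fact.
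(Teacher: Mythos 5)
Your proof is correct and follows essentially the same route as the paper: openness is forced by the fact that a closed subgroup of infinite (hence $p^{\infty}$-divisible) index in a pro-$p$ Poincar\'e duality group of dimension $2$ has $\cd_p \leq 1$, contradicting $\cd_p H_m = 2$ (the paper cites \cite{NSW} Chap.~III \S 7, Ex.~3 for exactly this step), and the index formula then follows. The only difference is cosmetic: the paper cites the rank formula for open subgroups as well known (\cite{De}, \cite{An}), whereas you derive it from multiplicativity of the Euler--Poincar\'e characteristic $\chi(H_m)=(H_n:H_m)\chi(H_n)$ with $\chi(H_r)=2-r$, which is in fact the standard proof of the cited statement.
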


\begin{proof}
%TODO: Beweis ist nicht sauber: die Gleichung unten ist nicht korrekt. Aussage ist von Jakob, ziemlich sicher dass es stimmt.
If $H_m \subseteq H_n$ is open, then $m = (H_n : H_m)(n-2) + 2 \geq n$, which is well-known (cf. \cite{De} or \cite{An} for a purely group-theoretic proof). If $H_m \subseteq H_n$ is not open, then $p^{\infty}$ divides the index $(H_n : H_m)$ and \cite{NSW} Chap. III \S7 Ex.3 implies that $\cd_p H_m < \cd_p H_n$, which is absurd, since both numbers are equal to $2$.
\end{proof}

In the original proof Neukirch used the following fact: let $H \subseteq G_S$ be a closed subgroup, which is isomorphic to the absolute Galois group of a local field of characteristic $0$. If an open subgroup of $H$ is contained in a decomposition subgroup $D_{\bap}$ of a prime $\bap \in S$, then also $H \subseteq D_{\bap}$. Unfortunately, this easy fact can not be applied to Theorem \ref{thm:decequivcondsgeneral}, since we do not know in general, whether the groups $D_{\bap}$ are the full local groups for $\bap \in S_f$. However, a more precise treatement involving $p$-Sylow subrgoups of decomposition subgroups is aviable.

\begin{lm}\label{lm:pdectypelocal} Let $H \subseteq \Gal_S$ be a closed subgroup of $p$-decomposition type. Assume that there is an open subgroup $H_0$ of $H$ with $H_0 \subseteq D_{\bap}$ for some $\bap \in S_f$. Then $H \subseteq D_{\bap}$.
\end{lm}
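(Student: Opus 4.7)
The plan is to reduce to Corollary \ref{cor:FKSnormalizerproperty}(i) applied to $H_0 \subseteq D_{\bap}$ and then conclude via the inclusion $H \subseteq \N_{\Gal_S}(H_0)$.

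First I would arrange that $H_0$ is normal in $H$. Since $H$ is a Demushkin group of rank $2$, in particular topologically finitely generated, the open subgroup $H_0$ has only finitely many conjugates in $H$; their intersection is still open in $H$, is now normal in $H$, and is still contained in $D_{\bap}$. So we may assume $H_0 \triangleleft H$.

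Next I would rule out the case $\bap \in S_p$. By the standing hypothesis two rational primes lie under $S$, one of them being $p$, so Corollary \ref{cor:CC_cor_real_locs} applies and shows that the canonical surjection $\Gal_{\overline{K_{\fp}}/K_{\fp}} \twoheadrightarrow D_{\bap}$ is an isomorphism. As $H_0$ is open in $H$, it is itself of $p$-decomposition type by Lemma \ref{lm:pdectypelm}(ii); but then $H_0$ embeds as a subgroup of $p$-decomposition type into the absolute Galois group of the $p$-adic field $K_{\fp}$, contradicting Lemma \ref{lm:notcontained}. Hence $\bap \in S_f \sm S_p$.

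Under this last assumption, Lemma \ref{lm:CFTnaslujbe} tells us that every $p$-Sylow subgroup $D_{\bap,p}$ of $D_{\bap}$ is itself of $p$-decomposition type. The group $H_0$ is pro-$p$ and contained in $D_{\bap}$, so after replacing $D_{\bap,p}$ by a suitable conjugate we may assume $H_0 \subseteq D_{\bap,p}$. Because $H_0$ is non-abelian and closed in $D_{\bap,p}$, Lemma \ref{lm:pdectypelm}(i) forces $H_0$ to be of $p$-decomposition type, and part (ii) of the same lemma then forces $H_0$ to be open in $D_{\bap,p}$. Corollary \ref{cor:FKSnormalizerproperty}(i) now gives $\N_{\Gal_S}(H_0) \subseteq D_{\bap}$, and $H_0 \triangleleft H$ yields the desired inclusion $H \subseteq D_{\bap}$.

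The main obstacle is precisely the case $\bap \in S_p$: here the direct arithmetic arguments used in Lemma \ref{lm:CFTnaslujbe} are unavailable, since one cannot use cyclotomic/Kummer constructions to separate the prime $\fp$ from the other primes dividing $p$, and one is forced to go through the deep Clozel--Chenevier theorem via Corollary \ref{cor:CC_cor_real_locs} to guarantee that $D_{\bap}$ is the full local absolute Galois group, which is what allows Lemma \ref{lm:notcontained} to exclude the existence of such an $H_0$.
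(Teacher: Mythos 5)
Your proof is correct and follows essentially the same route as the paper: replace $H_0$ by the intersection of its $H$-conjugates to make it normal, exclude $\bap\in S_p$ via Corollary \ref{cor:CC_cor_real_locs} and Lemma \ref{lm:notcontained}, embed $H_0$ as an open subgroup of a $p$-Sylow subgroup of $D_{\bap}$ using Lemmas \ref{lm:CFTnaslujbe} and \ref{lm:pdectypelm}, and conclude with Corollary \ref{cor:FKSnormalizerproperty}(i). The only cosmetic difference is that you invoke topological finite generation to get finitely many conjugates of $H_0$, whereas openness (finite index) already suffices.
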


\begin{proof}
Taking the intersection over all conjugates of $H_0$ in $H$, we can assume $H_0$ to be normal in $H$. By Lemma \ref{lm:pdectypelm}, $H_0$ is of $p$-decomposition type. Since two rational primes lie in $\caO_{K,S}^{\ast}$, the decomposition groups of primes in $S_p \subset S$ are the full local groups. Hence by Lemma \ref{lm:notcontained}, $\bap \not\in S_p$. Further, $H_0$ is a pro-$p$-subgroup of $D_{\bap}$, hence contained in a pro-$p$-Sylow subgroup $D_{\bap, p}$, which is again of $p$-decomposition type, since $\bap \not\in S_p$. Thus, $H_0 \subseteq D_{\bap, p}$ are both of $p$-decomposition type and the inclusion is open by Lemma \ref{lm:pdectypelm}. Since $H$ normalizes $H_0$, Corollary \ref{cor:FKSnormalizerproperty}(i) implies $H \subseteq D_{\bap}$.
\end{proof}

\subsection{Characterization of decomposition subgroups} \label{sec:ident_crits_for_dec_groups} 

Recall that in Section \ref{sec:gpsofpdectype} we associated to any group $H \cong \bZ_p \ltimes \bZ_p$ of $p$-decomposition type a character $\chi_H \colon H \rar \bZ_p^{\ast}$, which describes the action of the first $\bZ_p$ on the second. Recall that $\chi_p$ denotes the $p$-cyclotomic character on $\Gal_S$. For any open subgroup $U \subseteq G_S$, let $\pi_{p,U}$ denote the natural projection 
\begin{equation} \label{eq:pi_p_U}
\pi_{p,U} \colon U \tar \Cl_S(U)/p. 
\end{equation}

\noindent Then we have the following criteria for a subgroup of $p$-decomposition type of $G_S$ to lie in a decomposition subgroup of a prime.

\begin{prop}\label{prop:gencaseequivs} Let $H \subseteq \Gal_S$ be a closed subgroup of $p$-decomposition type. The following are equivalent:
\begin{itemize}
\item[(a)] $H \subseteq D_{\bap}$ for some $\bap \in S_f \sm S_p$.
\item[(b)] For some open subgroup $H_0 \subseteq H$, $\chi_p|_{H_0} = \chi_{H_0}$.
\end{itemize}
If moreover $\mu_p \subset K$, then they are also equivalent to
\begin{itemize}
\item[(c)] For $H$ the following condition holds:
\begin{itemize}
\item[(*)$_H$] For any $U \subseteq \Gal_S$ open: $H \subseteq U \Rightarrow H \subseteq \ker( \pi_{p,U} \colon U \tar \Cl_S(U)/p)$.
\end{itemize}
\end{itemize}
The prime $\bap$ in (a) is unique.
\end{prop}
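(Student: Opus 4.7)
The implication (a)$\Rightarrow$(b) is a direct local computation. Since $H$ is pro-$p$ and sits inside $D_{\bap}$, it is contained in some $p$-Sylow $D_{\bap,p}\subseteq D_{\bap}$, which by Lemma~\ref{lm:CFTnaslujbe} and Section~\ref{sec:cftapp_locsit} is of $p$-decomposition type. Lemma~\ref{lm:pdectypelm}(i)(ii) then forces $H\subseteq D_{\bap,p}$ to be open, and Lemma~\ref{lm:pdectypelm}(iv) guarantees that $\chi_{D_{\bap,p}}$ restricts to $\chi_H$. I would then identify $\chi_{D_{\bap,p}}$ with $\chi_p|_{D_{\bap,p}}$: since $\bap\notin S_p$, the residue characteristic is $\neq p$, so the inertia subgroup $I_{\bap,p}=(D_{\bap,p})_n$ is $\cong\bZ_p(1)$, a Frobenius lift acts on it by the residue norm $N\bap$, and $\chi_p$ on a local group away from $p$ is trivial on inertia and equals $N\bap$ on Frobenius --- matching the semidirect-product character exactly.

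For (b)$\Rightarrow$(a), the plan is a modified Neukirch/Brauer-group argument. Using Lemma~\ref{lm:pdectypelocal}, it suffices to put the open subgroup $H_0$ (not all of $H$) into some $D_{\bap}$, so I may replace $H$ by $H_0$ and assume $\chi_H=\chi_p|_H$ outright. The condition forces $H\cong\bZ_p\ltimes\bZ_p(1)$, i.e.\ $H$ is cohomologically indistinguishable from the pro-$p$ tame quotient of a local Galois group at a prime of residue characteristic $\neq p$. I would exploit this to produce a non-zero class in $\coh^2(H,\mu_p)\cong\bZ/p$ coming from a global class in $\coh^2(\Gal_S,\mu_p)$, and then run the Brauer/Hasse--Noether style localization argument: the cyclotomic match pins the location of this class to a single prime $\bap\in S_f$. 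The possibility $\bap\in S_p$ is ruled out by Lemma~\ref{lm:notcontained} together with the standing assumption that two rational primes lie in $\caO_{K,S}^{\ast}$, which makes $D_{\bap}$ the full local group for $\bap\in S_p$ (Corollary~\ref{cor:CC_cor_real_locs}) and hence too big to contain a subgroup of $p$-decomposition type.

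Under the extra assumption $\mu_p\subset K$, I would close the loop via (c). For (a)$\Rightarrow$(c): for any open $U\supseteq H$ with fixed field $L$, we have $H\subseteq D_{\bap|_L}\subseteq U$, and class field theory identifies $\Cl_S(U)/p$ with $U^{\mathrm{ab}}/p$ modulo the images of the decomposition subgroups at $S(L)$, so $H$ lies in $\ker\pi_{p,U}$ automatically. For (c)$\Rightarrow$(b), I would translate (*)$_H$ via Kummer theory into a statement dual to ``every everywhere unramified, $S$-split degree-$p$ extension of $L$ is killed by $H$'', analyze it with the Poitou--Tate duality using $\mu_p\subset K$, and read off the cyclotomic identity $\chi_H=\chi_p|_H$ on some open subgroup. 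Finally, uniqueness of $\bap$ follows cleanly: two candidates $\bap\neq\baq$ would give $H$ inside $D_{\bap,p}\cap D_{\baq,p}$ (choosing compatible Sylows), and $H$ being open in $D_{\bap,p}$ (Lemma~\ref{lm:pdectypelm}(ii)) would make the intersection open in $D_{\bap,p}$, contradicting Proposition~\ref{prop:notopenintersec}.

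\textbf{The hard part.} The genuine obstacle is (b)$\Rightarrow$(a). Neukirch's original birational argument rests on the vanishing of a suitable Tate--Shafarevich group; in the arithmetic setting this vanishing fails in general, and this is precisely why the character condition $\chi_p|_{H_0}=\chi_{H_0}$ has to do extra work. I expect the bulk of the effort to consist in carefully combining the rigid local structure imposed by (b) with the global $\coh^2$ of $\Gal_S$ to single out the unique prime $\bap$; everything else in the proposition --- the direction (a)$\Rightarrow$(b), the exclusion of $S_p$, the equivalence with (c), and uniqueness --- reduces to invocations of the lemmas already proved in Section~\ref{sec:FKS_for_S_finite} and routine class field theory.
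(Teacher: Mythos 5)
Your outline of (a)$\Rightarrow$(b), (a)$\Rightarrow$(c) and the uniqueness of $\bap$ matches the paper's proof, but the two substantive directions are left without their key mechanisms, and in one place your sketch points at a step that would actually fail. For (b)$\Rightarrow$(a) you propose to ``produce a non-zero class in $\coh^2(H,\mu_p)\cong\bZ/p$ coming from a global class in $\coh^2(\Gal_S,\mu_p)$'' and run a Hasse--Brauer--Noether localization. With finite coefficients $\mu_p$ this is exactly the step that breaks: the kernel of $\coh^2(\Gal_S,\mu_p)\rar\bigoplus_{\fp\in S}\coh^2(D_{\fp},\mu_p)$ is $\Sha^2(\Gal_S,\mu_p)\cong(\Cl_S/p)^{\vee}$, which is non-zero in general --- this is the very obstruction the introduction flags. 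The paper's resolution is to pass to the divisible module $\mu_{p^{\infty}}$: then $\Sha^2(\Gal_S,\mu_{p^{\infty}})=\dirlim_n\Sha^2(\Gal_S,\mu_{p^n})\cong[\prolim_n(\Cl_S/p^n)^{\vee}]^{\vee}=0$ because the transition maps are multiplication by $p$ on a finite group. The hypothesis $\chi_p|_{H_0}=\chi_{H_0}$ then enters precisely to guarantee $\coh^2(H_0,\mu_{p^{\infty}})=\bQ_p/\bZ_p\neq 0$ (for any other character this group vanishes, cf.\ the local computation in Lemma \ref{lm:cyclocharH2}), which is what lets one find a prime $\bap$ with non-trivial local contribution; a corank count in the surjection \eqref{eq:NeukirchH2modifiedsurj} then shows $\bap|_M$ is indecomposed. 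None of this appears in your sketch, and your phrase ``the cyclotomic match pins the location of this class'' does not substitute for it.

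For (c), you propose to prove (c)$\Rightarrow$(b) by ``reading off the cyclotomic identity'' from a Poitou--Tate analysis, but it is unclear how (*)$_H$ would directly yield the character identity; the paper instead proves (c)$\Rightarrow$(a), with finite coefficients $\bZ/p\bZ$, and the missing ingredient in your sketch is the $S$-version of the principal ideal theorem: condition (*)$_H$ guarantees that for each open $U\supseteq H$ the subgroup $U'=\ker(\pi_{p,U})$ again contains $H$, and the principal ideal theorem makes $\Cl_S(U)/p\rar\Cl_S(U')/p$ zero, so $\dirlim_{H\subseteq U}\Cl_S(U)/p=0$ and the localization map on $\coh^2(H,\bZ/p\bZ)$ becomes injective after the limit. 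Without either of these two devices (divisible coefficients plus the character hypothesis in one direction, the principal ideal theorem in the other) the proposal does not close the circle of implications.
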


\begin{proof} If $H \subseteq D_{\bap},D_{\bar{\fq}}$ with $\bap, \bar{\fq} \in S_f \sm S_p$, then $H \subseteq D_{\bap,p},D_{\bar{\fq},p}$ for some $p$-Sylow-subgroups, which are again of $p$-decomposition type. Hence by Lemma \ref{lm:pdectypelm}(ii), the last inclusions are open. Proposition \ref{prop:notopenintersec} implies then $\bap = \bar{\fq}$. This proves the uniqueness of $\bap$ in (a).

(a) $\Rightarrow$ (b): After replacing $\Gal_S$ by an appropriate open subgroup containing $H$, we can assume $H = D_{\bap,p} \cong \bZ_p \ltimes \bZ_p$ is a $p$-Sylow subgroup of $D_{\bap}$. Then the first $\bZ_p$ acts on the second as the unramified quotient on the inertia subgroup, i.e., by the $p$-cyclotomic character. This means $\chi_H = \chi_p|_H$.

(b) $\Rightarrow$ (a): By Lemma \ref{lm:pdectypelocal} we can assume that $K$ is totally imaginary. Again by Lemma \ref{lm:pdectypelocal} it is enough to show that $H_0 \subseteq D_{\bap}$ for some $\bap \in S_f$. First we claim that the restriction map

\[ \coh^2(\Gal_S, \mu_{p^{\infty}}) \rar \bigoplus_{\fp \in S(K)} \coh^2(D_{\fp}, \mu_{p^{\infty}}), \]

\noindent is injective. Indeed, its kernel is by Poitou-Tate duality equal to:

\begin{eqnarray*} 
\Sha^2(\Gal_S, \mu_{p^{\infty}}) &=& \dirlim_n \Sha^2(\Gal_S, \mu_{p^n}) = \dirlim_n [\Sha^1(\Gal_S, \bZ/p^n\bZ)^{\vee}] \\ 
&=& [\prolim_n \Sha^1(\Gal_S, \bZ/p^n\bZ)]^{\vee} \cong [\prolim_n (\Cl_S/p^n)^{\vee}]^{\vee} = 0,
\end{eqnarray*}

\noindent the last equality being true by finiteness of the Hilbert class field and as the transition maps in the inverse limit are multiplications by $p$. This proves our claim.

Now we can do the same for any open subgroup $U \subseteq \Gal_S$, and pass to the direct limit over all open $U$ containing $H_0$. Let $M$ denote the fixed field of $H_0$. By exactness of $\dirlim$ and some straightforward abstract nonsense we obtain:

\begin{equation}\label{eq:NeukirchH2modified}
0 \rar \coh^2(H_0, \mu_{p^{\infty}}) \rar \prod_{\fp \in S(M)} \coh^2(D_{\fp, K_S/M}, \mu_{p^{\infty}}).
\end{equation}

\noindent By (b), $\chi_p|_{H_0} = \chi_{H_0}$. Thus $\coh^2(H_0, \mu_{p^{\infty}}) = \bQ_p/\bZ_p$. From the sequence \eqref{eq:NeukirchH2modified}, there is a prime $\bap \in S_f$ with $\coh^2(D_{\bap, K_S/M}, \mu_{p^{\infty}}) \neq 0$. We claim that the prime $\fp = \bap|_M$ is indecomposed in $K_S/M$, i.e., that $H_0 = D_{\bap, K_S/M} \subseteq D_{\bap}$. Therefore, consider an open subgroup $H^{\prime} \subseteq H_0$ with corresponding fixed field $M^{\prime}$. For any open $H^{\prime} \subseteq U \subseteq \Gal_S$ with corresponding fixed field $L$, let $T_{\fp,H^{\prime}}(U)$ be the (finite) set of all primes of $L$ lying under a prime $\fp^{\prime} \in S_{\fp}(M^{\prime})$. Then we have the sequence

\[ \coh^2(U, \mu_{p^{\infty}}) \rar \bigoplus_{\fq \in T_{\fp,H^{\prime}}(U)} \coh^2(D_{\fq, K_S/L}, \mu_{p^{\infty}}) \rar 0, \]

\noindent which is exact by \cite{NSW} 9.2.1 (after passing to the limit over all finite submodules), since there are still non-archimedean primes in $S(L)$, which do not enter the index set of the direct sum. Passing to the limit over all open $U$ containing $H^{\prime}$ gives the exact sequence: 

\begin{equation}\label{eq:NeukirchH2modifiedsurj} \coh^2(H^{\prime}, \mu_{p^{\infty}}) \rar \bigoplus_{ \fp^{\prime} \in S_{\fp}(M^{\prime}) } \coh^2(D_{\fp^{\prime}, K_S/M^{\prime}}, \mu_{p^{\infty}}) \rar 0. \end{equation}

\noindent Since $\chi_p|_{H^{\prime}} = \chi_{H_0}|_{H^{\prime}} = \chi_{H^{\prime}}$, we have $\coh^2(H^{\prime}, \mu_{p^{\infty}}) \cong \bQ_p/\bZ_p$. Further, $\coh^2(D_{\fp^{\prime}, K_S/M^{\prime}}, \mu_{p^{\infty}}) \neq 0$. In fact, $D_{\fp^{\prime}, K_S/M^{\prime}}$ is conjugate to an open subgroup of $D_{\bap, K_S/M}$. But since $\coh^2(D_{\bap, K_S/M}, \mu_{p^{\infty}}) \neq 0$, also $\coh^2(V, \mu_{p^{\infty}}) \neq 0$ for any open subgroup $V \subseteq D_{\bap, K_S/M}$ (this is an easy fact on $p$-decomposition groups). By counting the coranks in  \eqref{eq:NeukirchH2modifiedsurj}, there is only one prime lying over $\fp$ in any finite extension $M^{\prime}/M$. Hence $\bap|_M$ is indecomposed.

Since $H$ is of $p$-decomposition type and the groups $D_{\bar{\fq}}$ with $\bar{\fq} \in S_p$ are the full local groups by Corollary \ref{cor:CC_cor_real_locs} (since two rational primes lie under $S$), Lemma \ref{lm:notcontained} implies $\bap \not\in S_p$.

(a) $\Rightarrow$ (c): Let $H \subseteq U \subseteq \Gal_S$ with last inclusion open. Consider the commutative diagram:

\centerline{
\begin{xy}
\xymatrix{ H \ar@^{(->}[r] & D_{\bap} \cap U \ar@^{(->}[r] \ar@{->>}[d] 	& U  \ar@{->>}[d] \ar@{->>}[rd]	&  \\
			   & (D_{\bap} \cap U)^{\ab} \ar[r]		& U^{\ab} \ar@{->>}[r]		& \Cl_S(U)/p.
}
\end{xy}
}

\noindent Since the composition of the maps in the lower row is zero by class field theory,
\[ H \subseteq D_{\bap} \cap U \subseteq \ker(U \tar \Cl_S(U)/p),\]

\noindent i.e., (*)$_H$ holds.

(c) $\Rightarrow$ (a): Assume now (*)$_H$ holds. For any $U \supseteq H$ open in $\Gal_S$ with corresponding field $L$, we have $\mu_p \subset L$, and hence by Poitou-Tate duality:
\[\Sha^2(U, \bZ/p\bZ) = \Sha^1(U, \mu_p)^{\vee} \cong \Sha^1(U, \bZ/p\bZ)^{\vee} = (\Cl_S(U)/p)^{\vee\vee} = \Cl_S(U)/p. \] 

\noindent This gives us the exact sequence:

\[  0 \rar \Cl_S(U)/p \rar \coh^2(U, \bZ/p\bZ) \rar \bigoplus_{\fp \in S(U)} \coh^2(D_{\fp, K_S/L}, \bZ/p\bZ). \]

\noindent Set $M = (K_S)^H$ and consider the limit of these sequences over all open $U \supseteq H$:

\[  0 \rar \dirlim_{H \subseteq U \subseteq \Gal_S} \Cl_S(U)/p \rar \coh^2(H, \bZ/p\bZ) \rar \prod_{\fp \in S(M)} \coh^2(D_{\fp, K_S/M}, \bZ/p\bZ), \]

\noindent This sequence is exact. We claim that $\dirlim_{H \subseteq U \subseteq \Gal_S} \Cl_S(U)/p = 0$. For an open $H \subseteq U \subseteq \Gal_S$, let $U^{\prime} := \ker(U \tar \Cl_S(U)/p)$. By the $S$-version of the principal ideal theorem, which states that any ideal class in $\Cl_S(U)/p$ gets trivial in the subfield of the Hilbert class field  corresponding to the quotient $\Cl(U) \tar \Cl_S(U)/p$ (cf. e.g. \cite{Ko} Theorem 8.11), the map $\Cl_S(U)/p \rar \Cl_S(U^{\prime})/p$, induced by inclusion on ideals, is zero. On the other side, $U^{\prime}$ appears in the index set of the limit due to (*)$_H$. Thus $\dirlim_{H \subseteq U \subseteq \Gal_S} \Cl_S(U)/p = 0$. Now we can conclude as in the (b) $\Rar$ (a) part (with $\mu_{p^{\infty}}$-coefficients replaced by $\bZ/p\bZ$), exactly as in the original argument of Neukirch \cite{Ne} Theorem 1.
\end{proof}

\begin{rem}\label{rem:crits_for_full_group}
With exactly the same proof (except for the uniqueness statements, which follow from Lemma \ref{lm:locfinindex} and Corollary \ref{cor:FKSnormalizerproperty}(ii) instead from Lemma \ref{lm:pdectypelocal} as above), the same criteria as in the proposition hold for $H$ if one assumes it to be a closed subgroup of $G_S$, which is isomorphic to the absolute Galois group of a local field of characteristic zero instead of a group of $p$-decomposition type.
\end{rem}

%************************************************************************************************************************************************************
%************************************************************************************************************************************************************

\subsection{Proof of Theorem \ref{thm:decequivcondsgeneral}}  \label{sec:gencaseproofeqcond}

\begin{proof}[Proof of (i) $\rightsquigarrow$ (ii)] Since we want to reconstruct the $p$-cyclotomic character $\chi_p$ only on an open subgroup of $\Gal_S$, we can assume $\mu_p \subset K$ and $K$ totally imaginary. Observe that $\chi_p$ on the local groups $D_{\bap}$ with $\bap \in S_p$ is determined by the group structure, since $D_{\bap}$ is the full local group in this case (cf. Section \ref{sec:localanabprop}). If $\bap \in S_f \sm S_p$, then $D_{\bap,p} \har D_{\bap} \tar D_{\bap}(p)$ is bijective; $\chi_p$ is determined on $D_{\bap,p}$ (in fact, it is equal to the character associated to the $p$-decomposition group $D_{\fp, p}$); and $\chi_p$ factors through $D_{\bap} \tar D_{\bap}(p)$. Thus $\chi_p$ is in this case also determined on $D_{\bap}$.  We have the following exact sequence from class field theory (\cite{NSW} 8.3.21(ii)):

\begin{equation}\label{eq:CFTSeqNF}
0 \rar \overline{\caO_{K,S}^{\ast}} \rar \prod_{\fp \in S(K)} D_{\bap}^{\rm ab} \rar \Gal_S^{\rm ab} \rar \Cl_S(K) \rar 0.
\end{equation}

\noindent The data given by (i) determine this sequence, since they determine the map in the middle. Since the global cyclotomic character factorizes through $\Gal_S^{\rm ab}$, it is determined by the local ones on the open subgroup $\ker(\Gal_S \tar \Cl_S(K))$ of $\Gal_S$.
\end{proof}

Under the additional assumption that the decomposition groups at $S_f$ are the full local groups, the proof of (i) $\rightsquigarrow$ (ii)' works similarly and (ii)' $\rightsquigarrow$ (ii) and (iii) $\rightsquigarrow$ (iii)' are immediate.

\begin{proof}[Proof of (i) $\rightsquigarrow$ (iii)] Assume the embeddings $(\iota_{\bap} \colon D_{\bap} \har \Gal_S)_{\bap \in S_f}$ are given. Then they are also given for any open subgroup $U \subseteq \Gal_S$. Let $U$ be such that the corresponding field $L$ is totally imaginary, i.e., the decomposition groups of archimedean primes are trivial. Then the sequence \eqref{eq:CFTSeqNF} for $U$ determines $\Cl_S(U)$ as the quotient of $U$ by the closure of the normal subgroup generated by the commutator and the images of $\iota_{\bap, L}$ for $\bap \in S_f$.
\end{proof}

\begin{proof}[Proof of (i) $\rightsquigarrow$ (iv)]
For any $U$, $\abs{S_f(U)}$ is equal to the number of the $U$-conjugacy classes of the subgroups $D_{\bap} \cap U$ and $\abs{S_{\infty}(U)}$ is given by the number of real/complex embeddings, which is deduced from $(\Gal_S,p)$ by Proposition \ref{prop:recinv_underLeo}.
\end{proof}

Finally we show the remaining directions, using criteria from Proposition \ref{prop:gencaseequivs}.

\begin{proof}[Proof of (ii) $\rightsquigarrow$ (i), (iii)' $\rightsquigarrow$ (i), (iv) $\rightsquigarrow$ (i)]
Assume (ii), (iii)' or (iv) is given. As we know that the decomposition subgroups of primes over $p$ are the full local groups and as the full local group determines the residue characteristic, Remark \ref{rem:crits_for_full_group} implies that we can reconstruct them from the given data.

\begin{lm}\label{lm:iiieqiv}
Assume $\mu_p \subset K$ (and $\mu_4 \subset K$ if $p = 2$) in Theorem \ref{thm:decequivcondsgeneral}. Then (iii)' $\leftrightsquigarrow$ (iv).
\end{lm}
\begin{proof} Since $\mu_p \subseteq K$, we have for every $U$ the exact sequence (${}_pA$ means the $p$-torsion of the abelian group $A$):
\[ 0 \rar \Cl_S(U)/p \rar \coh^2(U, \bZ/p\bZ) \rar  {}_p\coh^2(U, \caO_S^{\ast})  \rar 0, \text{ and }\]
\[ \dim_{\bF_p} {}_p \coh^2(U, \caO_S^{\ast}) = \abs{S_f(U)} - 1, \]

\noindent since $K$ is totally imaginary. Thus $\dim_{\bF_p} \coh^2(U, \bZ/p\bZ) + 1 = \dim_{\bF_p} \Cl_S(U)/p + \abs{S_f(U)}$. Since the number on the left is known, the knowledge of one of the summands on the right is equivalent to the knowledge of the other.
\end{proof}

\begin{lm} \label{lm:aus_iv_pi_p_U}
From the data in (iv) one can reconstruct the maps $\pi_{p,U}$ (cf. \eqref{eq:pi_p_U}) and for any $V \subseteq U \subseteq \Gal_S$ open, the maps $\Cl_S(U)/p \rar \Cl_S(V)/p$, which are induced by inclusion on ideals. 
\end{lm}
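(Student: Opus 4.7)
The plan is to first reconstruct $\pi_{p,U}$ by characterizing its kernel intrinsically in terms of $U$ and the numbers $\abs{S(\cdot)}$, and then to use the fact that the extension-of-ideals map on $S$-class groups corresponds, via reciprocity, to the transfer (Verlagerung) on abelianizations.

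By Lemma \ref{lm:iiieqiv}, the data (iv) determine $\abs{\Cl_S(U)/p}$ for every open $U \subseteq \Gal_S$. The surjection $\pi_{p,U} \colon U \tar \Cl_S(U)/p$ factors through the elementary abelian $p$-quotient $U/([U,U]U^p)$, and under class field theory $\Cl_S(U)/p$ is identified with $\Gal(L^{\prime}/L)$, where $L := K_S^U$ and $L^{\prime}/L$ is the maximal elementary abelian $p$-extension inside $K_S$ in which every prime of $S(L)$ splits completely. Any such $L^{\prime}$ is the compositum of its cyclic degree-$p$ subextensions. A cyclic degree-$p$ extension of $L$ inside $L^{\ab}(p)$ corresponds bijectively to an open subgroup $V \subseteq U$ with $V \supseteq [U,U]U^p$ and $(U:V) = p$ (such a $V$ is then automatically normal in $U$ and open in $\Gal_S$); and in such an extension every prime of $S(L)$ splits completely if and only if $\abs{S(V)} = p \cdot \abs{S(U)}$, since in a cyclic degree-$p$ extension each prime has either $1$ or $p$ primes above it. Therefore
\[
U^{\prime} := \ker(\pi_{p,U}) \;=\; \bigcap_V V,
\]
where $V$ ranges over the open subgroups of $U$ with $V \supseteq [U,U]U^p$, $(U:V) = p$, and $\abs{S(V)} = p \cdot \abs{S(U)}$. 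This intersection depends only on the group-theoretic structure of $U$ and on the data in (iv); hence $\pi_{p,U}$ is recovered as the canonical projection $U \tar U/U^{\prime}$, which we identify with $\Cl_S(U)/p$.

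For the second assertion, let $V \subseteq U$ be open with corresponding fields $L \subseteq M$. The extension-of-ideals map $\fa \mapsto \fa \caO_M$ induces a homomorphism $\Cl_S(L) \rar \Cl_S(M)$. Under the reciprocity isomorphism of global class field theory, this homomorphism corresponds to the transfer $\mathrm{Ver} \colon U^{\ab} \rar V^{\ab}$ composed with the canonical quotients onto the respective $S$-class groups. Since the Verlagerung is a purely group-theoretic construction and the subgroups $U^{\prime} = \ker \pi_{p,U}$ and $V^{\prime} = \ker \pi_{p,V}$ have already been reconstructed from the data in (iv), the induced map $\Cl_S(U)/p \rar \Cl_S(V)/p$ is obtained by reducing $\mathrm{Ver}$ modulo $p$ and passing to the quotients $U \tar U/U^{\prime}$ and $V \tar V/V^{\prime}$.

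The main obstacle is the justification of two classical class-field-theoretic facts (cf.\ \cite{NSW} Chap.~VIII): (a) the identification of $\Cl_S(L)/p$ with $\Gal(L^{\prime}/L)$ for the maximal $S$-split elementary abelian $p$-extension $L^{\prime}$ of $L$ inside $K_S$, which is what makes the numerical splitting criterion $\abs{S(V)} = p \cdot \abs{S(U)}$ detect $U^{\prime}$ exactly; and (b) the compatibility of the extension-of-ideals map on $S$-class groups with the Verlagerung on abelianizations. Once these inputs are in place, no further anabelian or finiteness assumption is required.
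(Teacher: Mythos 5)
Your proof is correct and follows essentially the same route as the paper: complete splitting at $S$ is detected by the count $\abs{S(V)} = (U:V)\,\abs{S(U)}$, such extensions are automatically unramified, class field theory identifies the resulting Galois group with the $S$-class group (mod $p$), and the maps induced by inclusion of ideals are recovered via the transfer $U^{\ab} \rar V^{\ab}$. The only cosmetic difference is that you cut out $\ker(\pi_{p,U})$ directly as an intersection of index-$p$ subgroups of the elementary abelian $p$-quotient, whereas the paper first reconstructs all of $\Cl_S(U)$ as $U/V_0$ with $V_0$ the intersection over all open normal $V$ with abelian quotient satisfying the same splitting criterion.
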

\begin{proof}
For any open $U$ with corresponding field $L$, we can describe the Galois group of the maximal abelian unramified extension of $L$, which is completely decomposed in $S$. By class field theory, it is canonically isomorphic to $\Cl_S(U)$. In fact, an extension of $L$, corresponding to an open subgroup $V \subseteq U$ is completely decomposed in $S$, if and only if $\abs{S(V)} = (U:V) \abs{S(U)}$. Observe that such extension is automatically unramified, since it is unramified outside $S$, as all groups are subquotients of $\Gal_S$, and also unramified in $S$, being completely decomposed there. Thus if we set $V_0 := \bigcap_V V$, where the intersection is taken over all open normal subgroups $V \subseteq U$, such that $\abs{S(V)} = (U:V) \abs{S(U)}$ and the quotient $U/V$ is abelian, then $U/V_0 \cong \Cl_S(U)$. Thus (iv) gives us the surjections $U \tar \Cl_S(U)$ and in particular the surjections
\[ \pi_{p,U} \colon U \tar \Cl_S(U)/p \]
\noindent (notice that (iii)' contains this information only implicitly!). Furthermore, for $V \subseteq U \subseteq \Gal_S$ open, the map $\Cl_S(U) \rar \Cl_S(V)$ induced by inclusion on ideals, is encoded in the group theory as the map induced by the transfer map $U^{\ab} \rar V^{\ab}$ (cf. e.g. \cite{Ne2}, after Proposition 6.13).
\end{proof}

Let now $U \subseteq \Gal_S$ be an open (normal) subgroup, small enough, such that the corresponding fixed field $L$ contains the $p$-roots of unity and is totally imaginary. By Proposition \ref{prop:gencaseequivs}, applied to $U$, using Corollary \ref{cor:FKSnormalizerproperty}(i) if necessary, we can decide, using the information given by (ii), (iii)' or (iv) and Lemmas \ref{lm:iiieqiv} and \ref{lm:aus_iv_pi_p_U}, whether a closed subgroup $H \subseteq U$ of $p$-decomposition type is contained in a decomposition subgroup of a prime in $S_f \sm S_p$. By Lemma \ref{lm:CFTnaslujbe} and Lemma \ref{lm:pdectypelocal}, the maximal subgroups with this property are exactly the $p$-Sylow subgroups of the groups $D_{\bap, K_S/L}$ with $\bap \in S_f \sm S_p$. Thus we have reconstructed the set
\[ \Syl_p(U, S_f \sm S_p) := \{H \subseteq U \colon H \text{ is a $p$-Sylow-subgroup of $D_{\bap,K_S/L}$ with } \bap \in S_f \sm S_p \}. \]

\noindent Now, $U$ acts on this set by conjugation. We have an $U$-equivariant surjection ($U$ acts trivially on the right side):

\[ \psi \colon \Syl_p(U, S_f \sm S_p) \tar (S_f \sm S_p)(U), \]

\noindent which sends $H$ to the (unique by Proposition \ref{prop:notopenintersec}!) prime $\bap|_L$, such that $H \subseteq D_{\bap,K_S/L}$. We want to determine, when two elements have the same image under $\psi$. For $H \in \Syl_p(U, S_f \sm S_p)$ such that $H \subseteq D_{\bap, K_S/L}$ is a $p$-Sylow subgroup, consider the restriction map

\[ \res^U_H \colon \coh^2(U, \bZ/p\bZ) \tar \coh^2(H, \bZ/p\bZ),\]

\noindent which is surjective, being equal to the composition
\[ \coh^2(U, \bZ/p\bZ) \tar \coh^2(D_{\bap,K_S/L}, \bZ/p\bZ) \stackrel{\sim}{\rar} \coh^2(H, \bZ/p\bZ), \]
\noindent in which the first map is surjective by \cite{NSW} 9.2.1, since $\abs{S_f(U)} > 1$, and the second is an isomorphism, since $\mu_p \subset L$.

\begin{lm} \label{lm:equivrelblabla}
Let $H, H^{\prime} \in \Syl_p(U, S_f \sm S_p)$. Then:
% $\psi(H) = \psi(H^{\prime})$ if and only if $\ker(\res^U_H) = \ker(\res^U_{H^{\prime}})$.
\[ \psi(H) = \psi(H^{\prime}) \LRar \ker(\res^U_H) = \ker(\res^U_{H^{\prime}}). \]
\end{lm}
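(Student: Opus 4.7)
The plan is first to reduce $\ker(\res_H^U)$ to the kernel of the localization of $\coh^2(U, \bZ/p\bZ)$ at the prime $\psi(H) \in S_f(L)$, and then to apply the Poitou-Tate style surjectivity of restrictions already used in the proof of Proposition \ref{prop:gencaseequivs} to separate distinct primes. For the reduction: since $H$ is a $p$-Sylow subgroup of $D_{\psi(H), K_S/L}$ and $\mu_p \subset L$, the restriction $\coh^2(D_{\psi(H), K_S/L}, \bZ/p\bZ) \to \coh^2(H, \bZ/p\bZ)$ is an isomorphism---both sides are $\bF_p$, and composition with corestriction is multiplication by $[D_{\psi(H), K_S/L} : H]$, which is prime to $p$. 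Thus $\ker(\res^U_H) = \ker(\res^U_{D_{\psi(H), K_S/L}})$, and likewise for $H'$.

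For direction $(\Rightarrow)$: if $\psi(H) = \psi(H')$, then $H$ and $H'$ are $p$-Sylow subgroups of decomposition subgroups $D_{\bap_1, K_S/L}, D_{\bap_2, K_S/L}$ for two primes $\bap_1, \bap_2$ of $K_S$ lying over a common prime of $L$. Since $U$ acts transitively on such primes, these decomposition subgroups are $U$-conjugate, and any two $p$-Sylows of one of them are conjugate therein. Hence $H' = u H u^{-1}$ for some $u \in U$, and since inner conjugation of $U$ acts trivially on $\coh^2(U, -)$, we get $\ker(\res^U_H) = \ker(\res^U_{H'})$.

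For direction $(\Leftarrow)$, I argue by contraposition. Assume $\bap := \psi(H) \neq \psi(H') =: \bap'$. Shrinking $U$ if necessary, I arrange $\abs{S_f(L)} \geq 3$; this is easy since at least two rational primes lie under $S$ (so $\abs{S_f(K)} \geq 2$) and individual primes split further in sufficiently deep subextensions. Picking a third prime $\bap'' \in S_f(L) \setminus \{\bap, \bap'\}$, the hypothesis of \cite{NSW} 9.2.1 is satisfied---a non-archimedean prime of $L$ remains outside the index set---so, exactly as in the proof of Proposition \ref{prop:gencaseequivs}, the joint restriction
\[ \coh^2(U, \bZ/p\bZ) \twoheadrightarrow \coh^2(D_{\bap, K_S/L}, \bZ/p\bZ) \oplus \coh^2(D_{\bap', K_S/L}, \bZ/p\bZ) \]
is surjective. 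Lifting $(0, 1) \in \bF_p \oplus \bF_p$ to $\xi \in \coh^2(U, \bZ/p\bZ)$ yields $\xi \in \ker(\res^U_{D_\bap}) \setminus \ker(\res^U_{D_{\bap'}})$, which via the reduction contradicts $\ker(\res^U_H) = \ker(\res^U_{H'})$. The main (mild) obstacle throughout is ensuring $\abs{S_f(L)} \geq 3$, so that a finite prime of $L$ remains outside $\{\bap, \bap'\}$ to enable \cite{NSW} 9.2.1; everything else is routine.
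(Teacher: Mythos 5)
Your proof is correct and follows essentially the same route as the paper: both reduce $\ker(\res^U_H)$ to the kernel of localization at $\psi(H)$ via the isomorphism $\coh^2(D_{\bap,K_S/L},\bZ/p\bZ)\stackrel{\sim}{\to}\coh^2(H,\bZ/p\bZ)$, and both separate distinct primes using the Poitou--Tate/NSW 9.2.1 surjectivity, which requires arranging $\abs{S_f(U)}\geq 3$ exactly as you note. Your explicit conjugacy argument for $(\Rightarrow)$ and the lifting of $(0,1)$ for $(\Leftarrow)$ are just unwound versions of the paper's description of $\ker(\res^U_H)$ as the extension of the trace-zero subspace supported away from $\psi(H)$ by $\Sha^2(U,\bZ/p\bZ)$.
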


\begin{proof} Consider the commutative diagram with exact row:

\centerline{
\begin{xy}
\xymatrix{
0 \ar[r] & \Sha^2(U, \bZ/p\bZ) \ar[r] & \coh^2(U, \bZ/p\bZ) \ar_{\res^U_H}[rd] \ar[r]	& (\bigoplus_{\fq \in S(L)} \coh^2(D_{\fq,K_S/L}, \bZ/p\bZ))^{\Sigma = 0} \ar[r] \ar[d] & 0\\
	 &  			   & 				     		& \coh^2(H, \bZ/p\bZ)
}
\end{xy}
}

\noindent where $\Sigma = 0$ means that we take the subspace of trace zero elements. The diagonal map factors through the vertical one, since $H \in \Syl_p(U, S_f \sm S_p)$. From this sequence we see, that if $\fp = \psi(H)$, then the kernel of $\res^U_H$ is the extension of the subspace $(\bigoplus_{\fq \in S(L) \sm \{\fp\} } \coh^2(D_{\fq,K_S/L}, \bZ/p\bZ))^{\Sigma = 0}$ of the space on the right side by $\Sha^2(U, \bZ/p\bZ)$. Two such subspaces of $\coh^2(U, \bZ/p\bZ)$ corresponding to $\fp$ resp. $\fp^{\prime}$ are equal if and only if $\fp = \fp^{\prime}$ (since we can assume $S_{p_1} \cup S_{p_2} \subsetneq S_f(U)$ and hence $\abs{S_f(U)} \geq 3$). This finishes the proof.
\end{proof}

The lemma gives a purely group-theoretical criterion to decide, whether two elements of $\Syl_p(U, S_f \sm S_p)$ lie in the same fibre of $\psi$. If we define an equivalence relation on  $\Syl_p(U, S_f \sm S_p)$ by $H \sim H^{\prime} :\LRar \ker(\res^U_H) = \ker(\res^U_{H^{\prime}})$, we get a bijective map induced by $\psi$:

\[\Syl_p(U, S_f \sm S_p)/\sim \quad \stackrel{\sim}{\longrightarrow} \quad (S_f \sm S_p)(U). \]

\noindent If $U^{\prime} \subseteq U \subseteq \Gal_S$, then we get a (non-canonical!) mapping
\[ \alpha \colon \Syl_p(U^{\prime}, S_f \sm S_p) \rar \Syl_p(U, S_f \sm S_p), \]

\noindent which sends $H^{\prime} \in \Syl_p(U^{\prime}, S_f \sm S_p)$ to some $H \in \Syl_p(U, S_f \sm S_p)$, such that $H^{\prime} \subseteq H$ (there is at least one by construction). If $H^{\prime} \subseteq H_1, H_2$, then $H_1, H_2 \subseteq D_{\bap}$ for some $\bap$ by Proposition \ref{prop:notopenintersec}. In particular, $\alpha$ induces a map

\[ \overline{\alpha} \colon \Syl_p(U^{\prime}, S_f \sm S_p)/\sim \rar \Syl_p(U, S_f \sm S_p)/\sim, \]

\noindent which is independent of the above choices. We obtain the following commutative diagram:

\centerline{
\begin{xy}
\xymatrix{
\Syl_p(U^{\prime}, S_f \sm S_p)/\sim  \ar^{\overline{\alpha}}[d] \ar^(0.55){\sim}[r] & (S_f \sm S_p)(U^{\prime}) \ar[d] \\
\Syl_p(U, S_f \sm S_p)/\sim  \ar^(0.55){\sim}[r]                 & (S_f \sm S_p)(U),
}
\end{xy}
}

\noindent where horizontal maps are bijections induced by $\psi$, and the vertical map on the right is the restriction of primes.

If $U \triangleleft \Gal_S$ is normal, then $\Gal_S$ acts on $\Syl_p(U, S_f \sm S_p)$ by conjugation. It is easy to see that this action induces via $\psi$ a $\Gal_S$-action on $(S_f \sm S_p)(U)$ and that this last action coincides with the action of $\Gal_S$ on this set by permuting the primes. \emph{In this way we have reconstructed the projective system of $\Gal_S$-sets $\{ (S_f \sm S_p)(U) \colon U \subseteq U_0, U \triangleleft \Gal_S \}$}, where $U_0 \subseteq \Gal_S$ is some open subgroup. Now the decomposition subgroups of primes in $S_f \sm S_p$ are exactly the stabilizers in $\Gal_S$ of elements in the $\Gal_S$-set $\prolim_{U \subseteq U_0, U \triangleleft \Gal_S} (S_f \sm S_p)(U)$. This finishes the proof of Theorem \ref{thm:decequivcondsgeneral}.
\end{proof}

%************************************************************************************************************************************************************
%************************************************************************************************************************************************************

\section{Invariants encoded in $\Gal_S$}\label{sec:discussion}

In this section we discuss easy consequences of Theorem \ref{thm:decequivcondsgeneral}, and prove Proposition \ref{prop:thenumbersSUschafin}.

\subsection{Recovering some global invariants}\label{sec:Leorec}

Let $K,S$ be a number field together with a finite set of primes. Assume there is a rational prime $p$ with $S \supseteq S_p \cup S_{\infty}$. Which invariants of $K$ are encoded in $\Gal_S$ resp. $(\Gal_S,p)$ resp. $(\Gal_S, p, \chi_p)$? The next two propositions determine some of these invariants.

\begin{prop}\label{prop:recinv_underLeo} Let $S$ be a finite set of primes of $K$. Assume there is a rational prime $p$ with $S_p \cup S_{\infty} \subseteq S$. Then $(G_S,p)  \rightsquigarrow [K:\bQ],r_1(K),r_2(K)$. If the Leopoldt conjecture is true for $K$ and for all rational primes, then $\Gal_S \rightsquigarrow [K:\bQ], r_1(K), r_2(K), \bN(S)$.
\end{prop}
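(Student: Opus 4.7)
I plan to extract all the claimed invariants from cohomology computations on $G_S$ and its open subgroups, handling the ``$(G_S,p)$'' assertion first and then the Leopoldt refinement.

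For $r_2(K)$, I would apply Tate's global Euler--Poincar\'e characteristic formula (\cite{NSW} 8.7.5) to $G_S$ with trivial coefficients $\bZ/p\bZ$. Since $S \supseteq S_p \cup S_\infty$, the formula reduces to
\[
\chi(G_S,\bZ/p\bZ) \;=\; \prod_{v \in S_\infty} \frac{\abs{\coh^0(G_v,\bZ/p\bZ)}}{p^{[K_v:\bR]}} \;=\; p^{-r_2(K)},
\]
since each real place contributes $1$ and each complex place contributes $p^{-1}$. Combined with $\dim_{\bF_p}\coh^0(G_S,\bZ/p\bZ)=1$, this yields
\[
r_2(K) \;=\; \dim_{\bF_p}\coh^1(G_S,\bZ/p\bZ) - \dim_{\bF_p}\coh^2(G_S,\bZ/p\bZ) - 1,
\]
a purely group-theoretic invariant of $(G_S,p)$; applied to any open $U \subseteq G_S$ with fixed field $L$, the same identity recovers $r_2(L)$. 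To then get $[K:\bQ]$, I would combine this with the sharp bound $r_2(L) \le [L:\bQ]/2 = [G_S:U]\cdot[K:\bQ]/2$, valid for every open $U$ and saturated precisely when $L$ is totally imaginary, so that
\[
\tfrac{1}{2}[K:\bQ] \;=\; \max_{U \subseteq G_S \text{ open}} \frac{r_2(L)}{[G_S:U]}.
\]
The maximum is attained because $K_S$ always contains a totally imaginary finite extension of $K$: take $L = K(\zeta_p)$ if $p$ is odd, or $L = K(\zeta_4) = K(i)$ if $p = 2$; in either case $L/K$ is totally imaginary and unramified outside $S_p \subseteq S$, hence corresponds to an open subgroup of $G_S$. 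This determines $[K:\bQ]$, and then $r_1(K) = [K:\bQ] - 2\, r_2(K)$.

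For the Leopoldt refinement, I would invoke the rank formula \cite{NSW} 10.3.20. Under Leopoldt for $K$ at every rational prime $q$, it asserts that $\rank_{\bZ_q} G_S^{\ab}(q) = r_2(K)+1$ when $q \in \bN(S)$, with a strictly smaller value when $q \notin \bN(S)$ --- the drop being controlled by the primes of $K$ above $q$ that lie outside $S$. Consequently
\[
r_2(K)+1 \;=\; \max_{q} \rank_{\bZ_q} G_S^{\ab}(q), \qquad \bN(S) \;=\; \{\, q \colon \rank_{\bZ_q} G_S^{\ab}(q) = r_2(K)+1 \,\},
\]
both intrinsic to $G_S$ alone. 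Once $\bN(S)$ is isolated, one picks any $p \in \bN(S)$ and applies the $(G_S,p)$-argument above to read off $[K:\bQ]$ and $r_1(K)$. The main obstacle is justifying the precise rank formula under Leopoldt --- especially the strict drop at $q \notin \bN(S)$, which is what enables the clean group-theoretic identification of $\bN(S)$ from $G_S$ alone.
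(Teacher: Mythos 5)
Your argument is essentially the paper's own proof: $r_2$ is read off from the global Euler--Poincar\'e characteristic $\chi(\Gal_S,\bZ/p\bZ)=-r_2(K)$, then $[K:\bQ]$ from a totally imaginary layer, then $r_1=[K:\bQ]-2r_2$, and under Leopoldt $\bN(S)$ and $r_2$ are detected via $\max_q \rank_{\bZ_q}\Gal_S^{\ab}(q)$. The one (harmless) variation is in recovering $[K:\bQ]$: you take the maximum of $2r_2(L)/(\Gal_S:U)$ over \emph{all} open $U$, justified by exhibiting $K(\zeta_p)$ (resp.\ $K(i)$ for $p=2$) inside $K_S$, whereas the paper writes down one explicit open subgroup (the pullback of $(p-1)p\cdot\Gal_S^{\ab}$, whose fixed field contains $\mu_{p^2}$ and is therefore totally imaginary). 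Both are correct and rest on the same observation that $K(\mu_{p^2})/K$ is unramified outside $S_p\subseteq S$.

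The genuine issue is the step you yourself flag and then leave open: the strict inequality $\rank_{\bZ_q}\Gal_S^{\ab}(q)<r_2+1$ for $q\notin\bN(S)$. This cannot be deferred to \cite{NSW} 10.3.20, which assumes $S\supseteq S_q\cup S_\infty$ and so says nothing about such $q$; and without strictness the characterizations $r_2+1=\max_q\rank_{\bZ_q}\Gal_S^{\ab}(q)$ and $\bN(S)=\{q:\rank_{\bZ_q}\Gal_S^{\ab}(q)=r_2+1\}$ both collapse, so this is the entire content of the Leopoldt half. The way to close it (and essentially what the paper does in one line) is: $\Gal_S^{\ab}(q)$ is the quotient of $\Gal_{S\cup S_q}^{\ab}(q)$ --- of $\bZ_q$-rank $r_2+1$ by Leopoldt --- by the closed subgroup generated by the images of the inertia groups at the primes of $S_q\sm S$. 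If the rank did not drop, that kernel would be finite. But the cyclotomic $\bZ_q$-extension $K_\infty/K$ is ramified at \emph{every} prime above $q$ with inertia open in $\bZ_q$ (it is, locally, the compositum with the totally ramified extension $\bQ_{q,\infty}/\bQ_q$), so for any $\fq\in S_q\sm S$ the image of $I_{\fq}$ already has infinite image in the quotient $\Gal_{S\cup S_q}^{\ab}(q)\tar\Gal_{K_\infty/K}\cong\bZ_q$. Hence the kernel is infinite and the $\bZ_q$-rank drops by at least one. With this inserted, your proof is complete.
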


\begin{proof} First we show the last statement. So, assume Leopoldt is true for $K$ and all rational primes. We show that $\Gal_S$ determines $r_2 = r_2(K)$ and the set $\bN(S)$. For any rational prime $p$ consider the number $r(p) := \rank_{\bZ_p} \Gal_S^{\ab}(p)$. The Leopoldt conjecture says that $r_2 + 1 = r(p)$ if $S_p \cup S_{\infty} \subseteq S$. If $S_p \not\subseteq S$, then at least the cyclotomic $\bZ_p$-extension is not contained in $K_S/K$, thus in this case
\[r(p) = \rank_{\bZ_p} \Gal_S^{\ab}(p) < \rank_{\bZ_p} \Gal_{S \cup S_p}^{\ab,p} = r_2 + 1.\]

\noindent Since $S_p \subseteq S$ for at least one $p$, we obtain $r_2 = \max_p \{r(p)\} - 1$, and a prime lies in $\bN(S)$ if and only if $r(p)$ is maximal.

Now it remains to recover $[K:\bQ]$ and $r_1$. Once $[K:\bQ]$ is known, $r_1$ can be recovered as $[K:\bQ] - 2r_2$. To recover $[K:\bQ]$, observe that if $K$ is totally imaginary, $[K:\bQ] = 2r_2$ can be recovered together with $r_2$. If $\pi : \Gal_S \tar \Gal_S^{\ab}$ denotes the natural surjection, and $U := \pi^{-1} (\im([(p-1)p] \colon \Gal_S^{\ab} \rar \Gal_S^{\ab}))$, then $U \subseteq \Gal_S$ is open and $L := K_S^U$ is totally imaginary. Indeed, $L$ contains the $p^2$-roots of unity, since they are contained in $K_S$ ($p^2$ and not simply $p$ is needed to cover the case $p=2$). Thus 
\[ [K:\bQ] = (\Gal_S : U)^{-1}[L:\bQ] = 2(\Gal_S : U)^{-1}r_2(L). \]

\begin{comment}
We find an open subgroup $U \subseteq \Gal_S$ such that $(K_S)^U$ is totally imaginary. Take a prime $p \in \bN(S)$. Let $\pi : \Gal_S \tar \Gal_S^{\ab}$ be the natural surjection, and set
\[U := \pi^{-1} (\im([(p-1)p] \colon \Gal_S^{\ab} \rar \Gal_S^{\ab})) \]
(we take $(p-1)p$ only to cover the case $p=2$: for all other primes $(p-1)$ would be enough). Then $U$ is open in $\Gal_S$ by class field theory (\cite{NSW} 8.3.21(ii)). Let $L$ be the subfield of $K_S$ fixed by $U$. Then $L$ contains every abelian subextension of $K_S/K$ of degree dividing $(p-1)p$. In particular, $L$ contains the $p^2$-roots of unity, since they are contained in $K_S$. Hence $L$ is totally imaginary and $[L:\bQ] = 2r_2(L)$ can be recovered as above. Therefore
\[ [K:\bQ] = \frac{[L:\bQ]}{(\Gal_S:U)}\]
can also be recovered from $\Gal_S$.
\end{comment}

To show the first (unconditional) statement of the proposition, notice that once a prime $p \in \bN(S)$ is known, one obtains $r_2(K)$ as the negative of the Euler characteristic $- \chi(\Gal_S, \bZ/p\bZ)$ (\cite{NSW} 8.7.5) and $[K:\bQ], r_1(K)$ as above, without assuming Leopoldt.
\end{proof}

\begin{prop} Let $K,S$ be a number field together with a set of primes, s.t. the decomposition groups at primes in $S_f$ are the full local groups. Assume $\Gal_S$ is given together with any one (or, equivalently, all) pieces of information from Theorem \ref{thm:decequivcondsgeneral}. Then one can recover the following invariants of $K$ and its extensions:
\begin{itemize}
\item[(i)] For any $U \subseteq \Gal_S$ open with corresponding field totally imaginary, the class number $\Cl(U)$.
\item[(ii)] For every $U^{\prime} \subseteq U \subseteq \Gal_S$ open, with corresponding fields totally imaginary, the natural maps $\Cl(U) \rar \Cl(U^{\prime})$.
\item[(iii)] For $U \subseteq \Gal_S$ small enough, with $L = (K_S)^U$, the roots of unity $\mu(L)$.
\item[(iv)] For any $U \subseteq \Gal_S$ open with $L = (K_S)^U$, the absolute inertia and ramification degrees $f_{\fp, L/\bQ_{\ell}}$ and $e_{\fp, L/\bQ_{\ell}}$ of any $\fp \in S_f(L)$ ($\fp$ lies over $\ell$).
\item[(v)] The set $\bN(S)$.
\item[(vi)] The numbers $[K:\bQ], r_1(K), r_2(K)$.
% \item[(v)] For any $U \subseteq \Gal_S$, with $L = K_S^U$, an upper and a lower estimate of the norm of discriminant $\abs{\N_{K/\bQ}(d_{L/K})}$: ...
\end{itemize}
\end{prop}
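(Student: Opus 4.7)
The plan is to invoke Theorem~\ref{thm:decequivcondsgeneral} throughout: under the standing hypotheses, knowing any one of its equivalent data promotes to knowing all of them. In particular, we may assume given the embeddings $\iota_{\bap}\colon D_{\bap}\hookrightarrow\Gal_S$ for $\bap\in S_f$, the numbers $|S(L)|$ and $\Cl_S(L)$ for every open $U\subseteq\Gal_S$ with fixed field $L$, and the cyclotomic character $\chi_p$ on some open subgroup. The easy items come first. Item (vi) is exactly Proposition~\ref{prop:recinv_underLeo}. For (iv), since every $D_{\bap}$ ($\bap\in S_f$) is the full local absolute Galois group by hypothesis, Section~\ref{sec:localanabprop} lets us read off from $D_{\bap}$ the residue characteristic $\ell$, the residue field cardinality (hence $f_\fp$) and $[L_\fp:\bQ_\ell]$, so that $e_\fp=[L_\fp:\bQ_\ell]/f_\fp$. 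Then for (v), a rational prime $\ell$ lies in $\bN(S)$ if and only if every prime of $K$ above $\ell$ already lies in $S_f$, equivalently
\[
\sum_{\fp\in S_f(K),\,\fp|\ell} e_\fp f_\fp=[K:\bQ];
\]
all terms are known from (iv), (vi) and the data $S_f(K)$ furnished by Theorem~\ref{thm:decequivcondsgeneral}.

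For (i), let $L=(K_S)^U$ be totally imaginary. The Hilbert class field $H(L)$ is unramified everywhere, hence is contained in $K_S$, and by class field theory $\Cl(L)=\Gal(H(L)/L)$ equals $U^{\mathrm{ab}}$ modulo the closed subgroup generated by the images of the inertia subgroups $I_\fp\subseteq D_\fp$ for $\fp\in S_f(L)$. Since $D_\fp$ is a full local group, $I_\fp$ is its canonical inertia subgroup, recovered from the group structure (Section~\ref{sec:localanabprop}). For (ii), the map $\Cl(U)\to\Cl(U')$ induced by extension of ideals corresponds via class field theory to the Verlagerung (transfer) map $U^{\mathrm{ab}}\to(U')^{\mathrm{ab}}$, which sends each $I_\fp$ into the subgroup generated by the inertia at primes of $L'$ above $\fp$; hence it descends to a well-defined map between the quotients constructed in (i).

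For (iii), I would take $U$ small enough that $L=(K_S)^U$ is totally imaginary, so that the archimedean factors disappear in the class field theoretic sequence~\eqref{eq:CFTSeqNF}:
\[
0\to\overline{\caO_{L,S}^*}\to\prod_{\fp\in S_f(L)}D_\fp^{\mathrm{ab}}\to U^{\mathrm{ab}}\to\Cl_S(L)\to 0.
\]
The middle map is furnished by the given embeddings $\iota_{\bap}$, so $\overline{\caO_{L,S}^*}$ is recovered as its kernel. Local reciprocity identifies $D_\fp^{\mathrm{ab}}\cong\widehat{L_\fp^*}$, whose torsion subgroup is $\mu(L_\fp)$, and $\mu(L)$ sits diagonally inside $\prod_\fp\mu(L_\fp)$; one then identifies $\mu(L)$ as the bounded-exponent torsion of $\overline{\caO_{L,S}^*}$. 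The main obstacle is verifying that passing to the closure introduces no spurious torsion beyond the diagonally embedded $\mu(L)$ itself. A prime-by-prime fallback is available: by (v) the set $\bN(S)$ is known, and applying Theorem~\ref{thm:decequivcondsgeneral} with $\ell$ in place of $p$ for each $\ell\in\bN(S)$ reconstructs $\chi_\ell$, whence $\mu_{\ell^\infty}(L)$ is the fixed part of $\bQ_\ell/\bZ_\ell$ under $\chi_\ell(U)$; provided that for $U$ small enough every prime divisor of $|\mu(L)|$ lies in $\bN(S)$, this assembles into $\mu(L)$.
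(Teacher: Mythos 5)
Items (i), (ii), (iv), (v) and (vi) of your argument are essentially the paper's: the paper also recovers $\Cl(U)$ as the abelianization of $U$ modulo the (closed normal subgroup generated by the) inertia subgroups of the $D_{\bap}$, $\bap\in S_f$, uses the transfer for (ii), reads off $e_{\fp},f_{\fp}$ from the full local groups for (iv), and detects $\bN(S)$ by a degree count ($n(\ell)=\sum_{\fp\in S\cap S_\ell}[K_\fp:\bQ_\ell]$ maximal, which is equivalent to your criterion $\sum e_\fp f_\fp=[K:\bQ]$); the order (v)/(vi) is interchanged but both directions are available since $(\Gal_S,p)$ is given.

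The genuine gap is in (iii), and you have in effect flagged it yourself without closing it. Your first route requires knowing that the torsion of $\overline{\caO_{L,S}^{\ast}}$ (the closure inside $\prod_{\fp}D_{\fp}^{\ab}\cong\prod_{\fp}\widehat{L_\fp^{\ast}}$) is exactly the diagonal $\mu(L)$; this is false in general, since already the closure of an infinite cyclic group $\langle u\rangle$ in a single $\caO_{L_\fp}^{\ast}\cong\mu(L_\fp)\times(\text{pro-}p\text{ part})$ acquires the torsion generated by the root-of-unity component of $u$, so the closure of the unit group can pick up torsion not coming from $\mu(L)$. Your fallback route only yields the $\ell$-primary parts $\mu_{\ell^{\infty}}(L)$ for $\ell\in\bN(S)$, and the proviso that every prime divisor of $\abs{\mu(L)}$ lie in $\bN(S)$ cannot be arranged by shrinking $U$: as $U$ shrinks, $L$ grows and $\mu(L)$ only gets larger, and $\mu(L)$ may well contain $\mu_\ell$ for $\ell\notin\bN(S)$ (for instance $\mu_\ell\subset K$ with $S_\ell\not\subseteq S$ is perfectly possible, since $K(\mu_\ell)/K$ is then trivially unramified outside $S$). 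The paper's actual proof of (iii) is a one-liner from the equivalence (i) $\leftrightsquigarrow$ (ii)$'$ of Theorem \ref{thm:decequivcondsgeneral}: under the standing hypothesis that the decomposition groups at $S_f$ are full local groups, the \emph{full} cyclotomic character (not merely its $p$-part, and not merely its $\ell$-parts for $\ell\in\bN(S)$) on some open subgroup $U_0$ is part of the recoverable data, because it is determined on each full local group $D_{\bap}$ and then glued via the sequence \eqref{eq:CFTSeqNF}; for $U\subseteq U_0$ one then reads off $\mu(L)$ as the fixed part $\mu(K_S)^{U}$ of the action it encodes. You should replace both of your routes for (iii) by this appeal to (ii)$'$.
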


\begin{proof}
(i) + (ii): If $K$ is totally imaginary, one obtains the group $\Gal_{\emptyset} = \Gal_{K_{\emptyset}/K}$, as the quotient of $\Gal_S$ by the closure of the normal subgroup generated by the inertia subgroups of all $D_{\bap}$, $\bap \in S_f$. Then canonically $\Gal_{\emptyset}^{\ab} \cong \Cl(K)$. The maps between two class groups are given by the transfer maps in the class field theory.

(iii) follows from (i) $\leftrightsquigarrow$ (ii)$^{\prime}$ in Theorem \ref{thm:decequivcondsgeneral}.

(iv) follows from the anabelian properties of local fields listed in Section \ref{sec:localanabprop}.

(v): for any rational prime $\ell$, let $n(\ell) := \sum\limits_{\fp \in S \cap S_{\ell}} [K_{\fp} : \bQ_{\ell}]$. This number can be reconstructed from the given data. Thus, $\ell \in \bN(S) \LRar n(\ell)$ is maximal. Finally (vi) follows from (v) and Proposition \ref{prop:recinv_underLeo}.
\end{proof}

% Without $D(K,S)$: For $U \subseteq \Gal_S$ small enough and $p \in \caO_{K,S}^{\ast}$, the group of $p$-roots of unity $\mu_{p^{\infty}}(U)$.

%************************************************************************************************************************************************************
%************************************************************************************************************************************************************

\subsection{The numbers $\abs{S_f(U)}$} \label{sec:numbers_SU}

\begin{proof}[Proof of Proposition \ref{prop:thenumbersSUschafin}] Recall that $\chi_p$ denotes the $p$-cyclotomic character, and that $\mu_p \subset K$ implies that its image lies in $\ker(\Aut(\bQ_p/\bZ_p) \tar \Aut(\frac{1}{p}\bZ/\bZ))$. Assume $\chi \colon \Gal_S \rar \bZ_p^{\ast}$ induces the trivial action on $\frac{1}{p}\bZ/\bZ$. We claim first that if $\chi|_{D_{\bap}} = \chi_p|_{D_{\bap}}$ for all $\bap \in S$, then $\chi = \chi_p$ on $\Gal_S$. Indeed, $\chi, \chi_p$ factor both through $\Gal_S^{\ab}$. Using sequence \eqref{eq:CFTSeqNF}, $\chi^{-1} \otimes \chi_p$ factors through a map $\Cl_S(K) \rar \bZ_p^{\ast}$, i.e., its image is finite, and on the other side the images of $\chi$ and $\chi_p$ lie in the subgroup $\ker(\Aut(\bQ_p/\bZ_p) \tar \Aut(\frac{1}{p}\bZ/\bZ)) \cong \bZ_p$, i.e., the image of $\chi^{-1} \otimes \chi_p$ does too, and hence is torsion-free. Thus $\chi^{-1} \otimes \chi_p$ is the trivial character of $\Gal_S$, or with other words $\chi = \chi_p$ on $\Gal_S$.

The last part of the Tate-Poitou sequence for the $\Gal_S$-modules $\bZ/p^n\bZ(\chi)$ gives, after changing to the limit over all $n > 0$, the following exact sequence:

\begin{equation*}
0 \rar \Sha^2(\Gal_S, \bQ_p/\bZ_p(\chi)) \rar \coh^2(\Gal_S, \bQ_p/\bZ_p(\chi)) \rar \bigoplus_{\fp \in S(K)} \coh^2(D_{\fp, K}, \bQ_p/\bZ_p(\chi)) \rar \coker \rar 0,
\end{equation*}

\noindent where
\begin{eqnarray*}
\coker  &=& \dirlim_n [\coh^0 (\Gal_S, \frac{1}{p^n}\bZ/\bZ(\chi^{-1} \otimes \chi_p))^{\vee}] = [\prolim_n \coh^0 (\Gal_S, \frac{1}{p^n}\bZ/\bZ(\chi^{-1} \otimes \chi_p))]^{\vee} = \\
        &=& [\coh^0 (\Gal_S, \bZ_p(\chi^{-1} \otimes \chi_p))]^{\vee} = \begin{cases} \bQ_p/\bZ_p & \text{if } \chi = \chi_p, \\ 0 & \text{if } \chi \neq \chi_p \end{cases}
\end{eqnarray*}

\noindent (the last equality holds, since the restriction map $\Aut(\bZ_p) \rar \Aut(p^n\bZ_p)$ is an isomorphism; thus if $\chi^{-1} \otimes \chi_p$ is trivial on some open subgroup of $\bZ_p$, then it is also trivial on $\bZ_p$). By our assumption, the corank (i.e., the $\bZ_p$-rank of the Pontrjagin-dual) of the first term in the sequence is zero. Thus the corank of the third term is equal to the sum of the coranks of the second and the last terms. We have the two cases:

\noindent \emph{Case $\chi = \chi_p$}. Then the corank of the third term is $\abs{S_f(K)}$ and the corank of the last term is $1$. Thus the corank of the second term is $\abs{S_f(K)} - 1$.

\noindent \emph{Case $\chi \neq \chi_p$}. Then by the claim above, $\chi|_{D_{\bap}} \neq \chi_p|_{D_{\bap}}$ for at least one $\bap \in S_f$. By Lemma \ref{lm:cyclocharH2}, the corank of the third term is $\leq \abs{S_f(K)}  - 1$, and the corank of the last term is $0$. Thus the corank of the second term is $\leq \abs{S_f(K)}  - 1$. The proposition follows.
\end{proof}

\begin{lm}\label{lm:cyclocharH2}
Let $\kappa$ be a local field, $p \neq \charac(\kappa)$ an odd prime. Let $\chi \colon \Gal_{\kappa} \rar \bZ_p^{\ast} = \Aut(\bQ_p/\bZ_p)$ be a character. The following are equivalent:
\begin{itemize}
\item[(i)]  $\coh^2(\Gal_{\kappa}, \bQ_p/\bZ_p(\chi)) \neq 0$.
\item[(ii)] $\chi$ is the $p$-part of the cyclotomic character.
\end{itemize}
\end{lm}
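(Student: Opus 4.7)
The plan is a direct application of local Tate duality. Since $\kappa$ is non-archimedean and $p \neq \charac(\kappa)$, for each $n \geq 1$ local duality provides a perfect pairing of finite groups
\[ \coh^2(\Gal_\kappa, \bZ/p^n\bZ(\chi)) \times \coh^0(\Gal_\kappa, \bZ/p^n\bZ(\chi^{-1}\chi_p)) \rar \bZ/p^n\bZ, \]
where $\chi_p$ denotes the local cyclotomic $p$-character on $\Gal_\kappa$. Passing to the direct limit in $n$ and using that Pontryagin duality interchanges direct and inverse limits of finite abelian groups, one obtains
\[ \coh^2(\Gal_\kappa, \bQ_p/\bZ_p(\chi))^{\vee} \cong \prolim_n \coh^0(\Gal_\kappa, \bZ/p^n\bZ(\chi^{-1}\chi_p)) = \coh^0(\Gal_\kappa, \bZ_p(\chi^{-1}\chi_p)). \]

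The question is therefore reduced to deciding when the right-hand side is nonzero. Setting $\psi := \chi^{-1}\chi_p \colon \Gal_\kappa \rar \bZ_p^{\ast}$, an element $x \in \bZ_p$ is fixed if and only if $\psi(\sigma)x = x$ for every $\sigma \in \Gal_\kappa$. Since $\bZ_p$ is torsion-free, the existence of a nonzero fixed $x$ forces $\psi(\sigma)=1$ for all $\sigma$, i.e., $\chi = \chi_p$. The converse is immediate, because $\psi = 1$ yields $\coh^0(\Gal_\kappa, \bZ_p) = \bZ_p \neq 0$. Taken together, this produces the equivalence (i) $\LRar$ (ii). As a sanity check, the implication (ii) $\Rar$ (i) can also be read off without any duality, by identifying $\coh^2(\Gal_\kappa, \bQ_p/\bZ_p(\chi_p)) = \coh^2(\Gal_\kappa, \mu_{p^{\infty}}) = \Br(\kappa)[p^{\infty}] \cong \bQ_p/\bZ_p$ via the invariant map.

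I do not expect a serious obstacle; the argument is essentially bookkeeping around local duality together with the torsion-freeness of $\bZ_p$. The single delicate point is verifying that Pontryagin duality commutes with the $\prolim$/$\dirlim$ step, but this is harmless because each $\coh^0(\Gal_\kappa, \bZ/p^n\bZ(\chi^{-1}\chi_p))$ is a finite group and the transition maps are the natural reductions, so Mittag-Leffler holds automatically. The oddness hypothesis on $p$ does not seem to enter in this lemma and is presumably inherited from the intended application in Proposition \ref{prop:thenumbersSUschafin}.
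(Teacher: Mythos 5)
Your proof is correct and takes essentially the same route as the paper's: local Tate duality for the modules $\bZ/p^n\bZ(\chi)$, passage to the limit using that Pontryagin duality exchanges $\dirlim$ and $\prolim$ of finite groups, and the observation that a nonzero fixed vector in $\bZ_p(\chi^{-1}\chi_p)$ forces the character to be trivial. (The paper phrases this last step via the isomorphism $\Aut(\bZ_p)\cong\Aut(p^n\bZ_p)$, which is the same torsion-freeness argument; your remark that the oddness of $p$ is not actually used here is also accurate.)
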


\begin{proof}[Proof of the lemma.]
Let $\chi_p$ denote the $p$-part of the cyclotomic character of $\Gal_{\kappa}$. The local duality gives:
\begin{eqnarray*}
 \coh^2(\Gal_{\kappa}, \bQ_p/\bZ_p(\chi)) &=& \dirlim_n \coh^2(\Gal_{\kappa}, \bZ/p^n\bZ(\chi)) = \dirlim_n [\coh^0(\Gal_{\kappa}, \bZ/p^n\bZ(\chi^{-1} \otimes \chi_p))^{\vee}] \\
                    &=& [\prolim_n \coh^0(\Gal_{\kappa}, \bZ/p^n\bZ(\chi^{-1} \otimes \chi_p))]^{\vee} = [\coh^0(\Gal_{\kappa}, \bZ_p(\chi^{-1} \otimes \chi_p))]^{\vee} \\
                    &=& \begin{cases} \bQ_p/\bZ_p & \text{if } \chi = \chi_p \\ 0 & \text{if } \chi \neq \chi_p. \end{cases}
\end{eqnarray*}

\noindent The last equality holds by the same reasoning as in the proposition.
\end{proof}

\begin{rem}
Observe that the proof of Proposition \ref{prop:thenumbersSUschafin} does not determine $\chi_p$ directly as \emph{the} character with the maximal corank of $\coh^2(\Gal_S, \bQ_p/\bZ_p(\chi))$, but only intrinsically by determining the numbers $\abs{S(U)}$ and using Theorem \ref{thm:decequivcondsgeneral}.
\end{rem}

%************************************************************************************************************************************************************
%************************************************************************************************************************************************************
%************************************************************************************************************************************************************
%************************************************************************************************************************************************************
%************************************************************************************************************************************************************
%************************************************************************************************************************************************************

%************************************************************************************************************************************************************
%************************************************************************************************************************************************************
%************************************************************************************************************************************************************
%************************************************************************************************************************************************************

\renewcommand{\refname}{References}

\end{document}